\newtheorem{theorem}{Theorem}
\newtheorem{lemma}[theorem]{Lemma}
\newtheorem{proposition}[theorem]{Proposition}
\newtheorem{definition}[theorem]{Definition}
\newenvironment{proof}{\begin{trivlist}
    \item[\hskip\labelsep{\bf Proof.}]}{$\hfill\Box$\end{trivlist}}
\theoremstyle{plain} \theorembodyfont{\rmfamily}
\newtheorem{remark}[theorem]{Remark}}
\theoremstyle{plain} \theorembodyfont{\rmfamily}
\newtheorem{example}[theorem]{Example}}
\newcommand{\bsgamma}{{\boldsymbol{\gamma}}}
\newcommand{\bst}{{\boldsymbol{t}}}
\newcommand{\bsx}{{\boldsymbol{x}}}
\newcommand{\bsy}{{\boldsymbol{y}}}
\newcommand{\bsz}{{\boldsymbol{z}}}
\newcommand{\bspitch}{{\boldsymbol{\,\pitchfork}}}
\newcommand{\an}{\mathrm{A}}
\newcommand{\gc}{\star}
\newcommand{\bszero}{{\boldsymbol{0}}}
\newcommand{\rd}{\,\mathrm{d}}
\newcommand{\bbR}{\mathbb{R}}
\newcommand{\bbN}{\mathbb{N}}
\newcommand{\mask}[1]{}
\newcommand{\esup}{\operatornamewithlimits{ess\,sup}}
\newcommand{\e}{{\varepsilon}}
\newcommand{\setu}{{\mathfrak{u}}}
\newcommand{\setU}{{\mathfrak{U}}}
\newcommand{\setv}{{\mathfrak{v}}}
\newcommand{\setw}{{\mathfrak{w}}}
\title{Equivalence of Weighted Anchored and ANOVA\\
    Spaces of Functions with Mixed Smoothness\\ of Order one in 
    $L_p$}
\author{M. Gnewuch, M. Hefter, A. Hinrichs, K. Ritter, 
and G. W. Wasilkowski} 
\date{October 21, 2016}
\begin{document}
\maketitle

\begin{abstract}
We consider $\bsgamma$-weighted anchored and ANOVA spaces 
of functions with mixed first order
partial derivatives bounded in a weighted $L_p$ norm with
$1 \leq p \leq \infty$. 
The domain of the functions is $D^d$, where $D \subseteq \bbR$
is a bounded or unbounded interval. 
We provide conditions 
on the weights $\bsgamma$ that guarantee that anchored and ANOVA
spaces are equal (as sets of functions) and have equivalent
norms with equivalence constants uniformly or polynomially
bounded in $d$. 
Moreover, we discuss applications of these results
to integration and approximation of functions on $D^d$.
\end{abstract}

\section{Introduction}

This paper studies the equivalence of anchored and ANOVA spaces,
a research initiated in \cite{HeRi13} in an abstract setting
for reproducing kernel Hilbert spaces. It  
provides extensions of the results obtained in 
\cite{HeRiWa14,HS16}.
The two major differences between those papers and the 
current one are the following.
First of all, the domain of functions in the former two
papers is $[0,1]^d$, whereas we allow now for 
$D^d$ with any (bounded or unbounded) interval $D \subseteq
\bbR$. To simplify the presentation we assume that $0 = \min D$
throughout this paper.
Secondly, the standard $L_p$ norms were used 
in the former papers whereas we consider now mixed $L_p$-$\ell_q$
norms with $1 \leq p,q \leq \infty$
that are based on a probability density function
$\psi$ that is positive a.e.~on $D$. 

We now describe briefly the $\bsgamma$-weighted 
anchored and ANOVA norms and spaces, which are studied in detail
in Sections \ref{s2} and \ref{s3}. 
Consider first $d=2$ and $p=q=1$, 
and, in order to 
simplify the presentation in the introduction, a function $f : D^2
\to \bbR$ with continuous mixed partial derivatives of order one.
For a family $\bsgamma = (\gamma_{\setu})_{\setu\subseteq \{1,2\}}$
of positive reals the $\bsgamma$-weighted \emph{anchored} norm of $f$ is 
given by 
\begin{eqnarray*}
  \|f\|_{W_{\bspitch,1,1,\psi,\bsgamma}}&=&
\gamma_{\emptyset}^{-1}\,|f(0,0)|
    +\gamma_{\{1\}}^{-1} \,\int_D 
   \left|\tfrac{\partial}{\partial x_1}
      f(x_1,0)\right|\,\psi(x_1)\rd x_1\\
  &&\quad +
  \gamma_{\{2\}}^{-1} \,\int_D 
\left|\tfrac{\partial}{\partial x_2}
      f(0,x_2)\right|\,\psi(x_2)\rd x_2\\
  &&\quad +
  \gamma_{\{1,2\}}^{-1}\,\int_D    \int_D 
   \left|\tfrac{\partial^2}{
   \partial x_1 \partial x_2}f(x_1,x_2)\right|\,\psi(x_2)\rd x_2
    \,\psi(x_1)\rd x_1,
\end{eqnarray*}
and the $\bsgamma$-weighted \emph{ANOVA} norm is given by 
\begin{eqnarray*}
  \|f\|_{W_{A,1,1,\psi,\bsgamma}}&=&\gamma_{\emptyset}^{-1}\, 
   \left|\int_D \int_D 
     f(x_1,x_2)\,\psi(x_2)\rd x_2\,\psi(x_1)\rd x_1\right|\\
  &&\quad +\gamma_{\{1\}}^{-1} \,\int_D \left|\int_D 
    \tfrac{\partial}{\partial x_1}
      f(x_1,x_2)\,\psi(x_2)\rd x_2\right|\,\psi(x_1)\rd x_1\\  
  &&\quad +
  \gamma_{\{2\}}^{-1} \,\int_D   \left|\int_D 
    \tfrac{\partial}{\partial x_2}f(x_1,x_2)\,\psi(x_1)\rd x_1\right|
    \,\psi(x_2)\rd x_2\\
  &&\quad +
  \gamma_{\{1,2\}}^{-1}\,\int_D \int_D     \left|\tfrac{\partial^2}{
   \partial x_1 \partial x_2}f(x_1,x_2)\right|\,\psi(x_2)\rd x_2
    \,\psi(x_1)\rd x_1,
\end{eqnarray*}
provided that the respective integrals are finite.
For $1 < p \leq \infty$ or $1 < q \leq \infty$, 
the definition of the $\bsgamma$-weighted anchored and ANOVA 
norms are appropriately changed.

For arbitrary $d\in\bbN$
and a family $\bsgamma=(\gamma_{\setu})_{\setu\subseteq \{1,\ldots,d\}}$ 
of non-negative weights the norms of functions of $d$
variables have $2^d$ terms involving 
various mixed partial derivatives $\partial^{|\setu|}/\prod_{j\in\setu}
\partial x_j$, each weighted by $\gamma_\setu^{-1}$ for $\setu\subseteq
\{1,\dots,d\}$. In particular,
if $\gamma_\setu=0$ then the corresponding 
term involving $\partial^{|\setu|}/\prod_{j\in\setu}\partial x_j$ is 
assumed to be zero. 

Roughly speaking, the $\bsgamma$-weighted
anchored space $W_{\bspitch,p,q,\psi,\bsgamma}$ is 
the \emph{Banach space of functions}
with finite anchored norm, 
and the ANOVA space $W_{A,p,q,\psi,\bsgamma}$ 
is the \emph{Banach space of functions}
with finite ANOVA norm.
Actually there a two different ways how to rigorously
define these spaces, even for $D=[0,1]$ and $\psi=1$, which is
most frequently studied in the literature.
The first approach is to consider weak derivatives, but then
the problem arises how to define the section (for the anchored norm) or 
the integral (for the ANOVA norm) of a weak derivative on a set of 
measure zero; we refer to \cite{HeRiWa14} for a solution in the
case $D=[0,1]$, $p=q$, and $\psi=1$. The second
approach, which is presented here and which is well suited
for the application of interpolation theory,
is based on smoothing and superposition of
functions with suitable integrability properties.
This approach, however, does not immediately yield
an intrinsic characterization of the whole space via
differentiability properties.

We only state here 
that $p$ and $\psi$ have to satisfy certain integrability
conditions, see \eqref{eq1} and \eqref{eq2},
for the spaces to be well defined, 
and that the spaces are continuously embedded into the space
of continuous functions on $D^d$.
The integrability conditions reveal, in particular, that
$m_\psi< \infty$ and $\kappa_\psi < \infty$ for
\[
  m_\psi\,=\,\int_D y\,\psi(y)\rd y
\] 
and
\[
  \kappa_\psi \,=\,\esup_{t\in D}\frac{\int_{D \cap (t,\infty)}
\psi(y)\rd y}{\psi(t)} 
\]
are necessary for the anchored and the ANOVA space to be well 
defined for any $p$. 

The main aim of this paper is to compare the spaces
$W_{\bspitch,p,q,\psi,\bsgamma}$ and $W_{A,p,q,\psi,\bsgamma}$ 
and their norms, see Section \ref{s4}. It turns out that
$W_{\bspitch,p,q,\psi,\bsgamma}=W_{A,p,q,\psi,\bsgamma}$ if and only if 
$\bsgamma$ satisfies the condition
\[
  \gamma_\setw\,>\,0\quad\mbox{implies that}\quad
  \gamma_\setu\,>\,0\mbox{\ for all\ }\setu\,\subseteq\,\setw, 
\] 
which is assumed to hold for the rest of the introduction.
Let 
\[
  \imath_{p,q,\psi,\bsgamma}:     
W_{\an,p,q,\psi,\bsgamma}\hookrightarrow W_{\bspitch,p,q,\psi,\bsgamma}
\]
denote the embedding operator from the ANOVA space into the
anchored space, which, together with its inverse, is continuous due to the
closed graph theorem. We show that both, 
$W_{\bspitch,p,q,\psi,\bsgamma}$ and $W_{A,p,q,\psi,\bsgamma}$
are isometrically isomorphic to a $\bsgamma$-weighted $\ell_q$
sum of spaces $L_{p,\psi}(D^\setu)$. The latter consist of
equivalence classes of real-valued functions on $D^\setu$, where
$\setu \subseteq D^d$, with norms given by
\[
    \|g\|_{L_{p,\psi}(D^\setu)}\,=\,\left(\int_{D^\setu}|g(\bsx)|^p
    \prod_{j=1}^d\psi(x_j)\rd\bsx\right)^{1/p}
\]
for $1 \leq p < \infty$, with the usual modification for $p=\infty$.
We employ these isometric isomorphisms to show that
\[
\|\imath_{p,q,\psi,\bsgamma}\| \,=\,
\|\imath^{-1}_{p,q,\psi,\bsgamma}\|.
\]
Moreover, we provide explicit expressions for the norm
$\|\imath_{p,q,\psi,\bsgamma}\|$ of the embedding in terms of 
$\bsgamma$, $m_\psi$, and $\kappa_\psi$ in the four extremal cases
corresponding to $p,q \in \{1,\infty\}$. In all other cases
we use complex interpolation theory to get upper bounds 
for $\|\imath_{p,q,\psi,\bsgamma}\|$.

Observe that $\|\imath_{p,q,\psi,\bsgamma}\|$ depends on the 
number $d$ of variables only via the family $\bsgamma$ of weights.
In Section \ref{s5} we study, for a number of different classes of 
weights, when the norms $\|\imath_{p,q,\psi,\bsgamma}\|$ of the 
embeddings are uniformly bounded in $d$. 
If this holds, then we say that the $\bsgamma$-weighted anchored 
and ANOVA spaces 
are \emph{equivalent independently of the dimension}. 

The paper is organized as follows. Section \ref{s2} provides basic facts 
used in the paper. The $\bsgamma$-weighted anchored and ANOVA
spaces and norms are discussed in Section \ref{s3}. The main
results are stated in Section \ref{s4}. 
Applications of the results to special classes of weights can be found
in Section \ref{s5}. 
Applications of the results from Sections \ref{s4}
and \ref{s5} to integration and approximation of functions
are discussed in Section \ref{s6}.

\section{Notation and Basic Facts}\label{s2}
Let 
$D \subseteq [0,\infty)$ be an interval with $0 \in D$, i.e.,
\[
  D\,=\,[0,T)\mbox{\ for some\ }T\in(0,\infty]\quad\mbox{or}\quad
  D\,=\,[0,T]\mbox{\ for some\ }T\in(0,\infty),
\]
and let 
\[
   \psi:D\to [0,\infty)
\]
be a probability density function that is positive almost
everywhere.

Let $d$ be a positive integer. 
In what follows we will use $\setu,\setv,$ and $\setw$ to denote subsets
of $[1:d]$, where 
\[
  [1:d]\,=\,\{1,\dots,d\},
\]
and we will denote the complement of $\setu$ in $[1:d]$ by 
$\setu^c$.
We will also use $\bst,\bsx,\bsy,$ and $\bsz$ to denote points from $D^d$,
and we will often use the following notation
\[
   [\bsx_\setu;\bst_{\setu^c}]\,=\,(y_1,\dots,y_d)\quad\mbox{with}\quad
   y_j\,=\,\left\{\begin{array}{ll} x_j&\mbox{if\ }j\in\setu,\\
         t_j&\mbox{if\ }j\in \setu^c.\end{array}\right.
\]
For $\setu \neq \emptyset$
we will write $D^\setu$ to denote the set of points 
$\bsx_\setu=(x_j)_{j\in\setu}$ with $x_j\in D$. To simplify the notation 
we will often write $\bsx_\setu;\bst_{\setu^c}$ instead of 
$[\bsx_\setu;\bst_{\setu^c}]$. 

In the sequel, let $\setu \neq \emptyset$.
We will often consider real-valued functions on $D^d$ that only
depend on the variables with indices from $\setu$. To simplify the
notation we will identify any such function with a function 
on $D^\setu$ in the canonical way; vice versa, functions of
the latter kind are identified with functions on $D^d$.

In the sequel,
the integrability index $p$ satisfies $1 \leq p \leq \infty$, and
$p^\prime$ denotes its conjugate index given by $1/p+1/p^\prime=1$.
Furthermore, 
let 
\[
\psi_\setu(\bst_\setu)\,=\,\prod_{j\in\setu}\psi(t_j).
\]
Let $L_{p,\psi}(D^\setu)$ be the Banach space of 
functions on $D^\setu$ with the corresponding weighted $L_p$ norm.
For $1 \leq p < \infty$ this norm is given by
\[
  \|g\|_{L_{p,\psi}(D^\setu)}\,=\,\left(\int_{D^\setu}|g(\bst_\setu)|^p\,
  \psi_\setu(\bst_\setu)\rd\bst_\setu\right)^{1/p}.
\]
For $p=\infty$ we have the usual modification
\[
  \|g\|_{L_{\infty,\psi}(D^\setu)}\,=\,\|g\|_{L_\infty(D^\setu)}\,=\,
  \esup_{\bst_\setu\in D^\setu}|g(\bst_\setu)|,
\]
which does not depend on $\psi$.
The space of locally $p$-integrable functions on  $D^{\setu}$ is given by
\[
L_{p}^{\mathrm{loc}}(D^{\setu}) = 
\{f:D^\setu \to \bbR \,:\, f|_{[{\bf 0}, {\bsx})} \in 
L_p([{\bf 0}, {\bsx}) )
\hspace{1ex}\text{for all $\bsx \in D^\setu$}\}.
\]
Here $[{\bf 0}, {\bsx})$ denotes the half-open interval
in $D^\setu$ with lower left corner ${\bf 0}$ and upper right corner 
$\bsx$.  

By $1_{[0,x)}$ we denote the indicator function of the interval $
\left[0,x\right) \subseteq \bbR$. Consider 
\[
  K_\bspitch(x,t)\,=\, 1_{[0,x)} (t) 
\quad\mbox{and}\quad
  K_\an(x,t)\,=\, 1_{[0,x)}(t)-\overline{\psi}(t),
\]
where $x,t \in D$ and
\[
  \overline{\psi}(t)\,=\,\int_t^T\psi(y)\rd y\,=\,
  \int_DK_\bspitch(y,t)\, \psi(y)  \rd y. 
\]
Define 
\[
   K_{\bspitch,\setu}(\bsx_\setu,\bst_\setu)\,=\,
    \prod_{j\in\setu}K_\bspitch(x_j,t_j) \quad\mbox{and}\quad
  K_{\an,\setu}(\bsx_\setu,\bst_\setu)\,=\,
  \prod_{j\in\setu}K_\an(x_j,t_j).
\]
Both, $K_{\bspitch,\setu}$ and $K_{\an,\setu}$, will
be used as integral kernels in the construction of the anchored
and the ANOVA spaces. At first we study the basic integrability
properties.

\begin{lemma}\label{l1}\hfill
\begin{itemize}
\item[{\rm (i)}]
We have
\[
  \int_{D^\setu} |g_\setu(\bst_\setu)|\,
    K_{\bspitch,\setu}(\bsx_\setu,\bst_\setu)\rd\bst_\setu
< \infty
\]
for every $g_\setu \in L_{p,\psi}(D^\setu)$ and every $\bsx_\setu \in
D^\setu$ if and only if 
\begin{equation}\label{eq1}
\left.
\begin{aligned}
&p = \infty \quad \mbox{or}\\
&p < \infty \quad\mbox{and}\quad
\psi^{-1/p} \in L_{p^\prime}^{\mathrm{loc}}(D).
\end{aligned}\quad
\right\}
\end{equation}
\item[{\rm (ii)}]
We have
\[
  \int_{D^\setu} |g_\setu(\bst_\setu)|\,
    |K_{\an,\setu}(\bsx_\setu,\bst_\setu)|\rd\bst_\setu
< \infty
\]
for every $g_\setu \in L_{p,\psi}(D^\setu)$ and every $\bsx_\setu \in
D^\setu$ if and only if 
\begin{equation}\label{eq2}
\left.
\begin{aligned}
& p = \infty \quad\mbox{and}\quad \overline{\psi} \in L_1(D)
\quad \mbox{or} \\
& p < \infty 
\quad\mbox{and}\quad
\psi^{-1/p} \in L_{p^\prime}^{\mathrm{loc}}(D)
\quad\mbox{and}\quad
\overline{\psi} \cdot \psi^{-1/p} \in L_{p^\prime}(D).
\end{aligned}\quad
\right\}
\end{equation}
\end{itemize}
\end{lemma}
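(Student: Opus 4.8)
The plan is to reduce both assertions, via $L_p$–duality, to the finiteness of an explicit one–dimensional integral, and then to read off \eqref{eq1} and \eqref{eq2} by direct computation. Fix $\bsx_\setu\in D^\setu$ and a kernel $K_{\bullet,\setu}\in\{K_{\bspitch,\setu},K_{\an,\setu}\}$. Writing the integral as a pairing against the probability weight,
\[
\int_{D^\setu}|g_\setu|\,|K_{\bullet,\setu}(\bsx_\setu,\cdot)|\rd\bst_\setu\,=\,
\int_{D^\setu}|g_\setu|\,\bigl(|K_{\bullet,\setu}(\bsx_\setu,\cdot)|\,\psi_\setu^{-1}\bigr)\,\psi_\setu\rd\bst_\setu ,
\]
it is the $L_{p,\psi}(D^\setu)$–$L_{p^\prime,\psi}(D^\setu)$ duality pairing of $g_\setu$ with $K_{\bullet,\setu}(\bsx_\setu,\cdot)\,\psi_\setu^{-1}$. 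By Hölder's inequality it is finite for every $g_\setu\in L_{p,\psi}(D^\setu)$ as soon as $K_{\bullet,\setu}(\bsx_\setu,\cdot)\,\psi_\setu^{-1}\in L_{p^\prime,\psi}(D^\setu)$, and the converse of Hölder (that $L_{p^\prime,\psi}$ is the associate space of $L_{p,\psi}$, the necessary test function being built in the usual way from $|K_{\bullet,\setu}\psi_\setu^{-1}|^{p^\prime-1}$) shows this is also necessary. Hence, for each fixed $\bsx_\setu$, finiteness for all $g_\setu$ is equivalent to $\int_{D^\setu}|K_{\bullet,\setu}(\bsx_\setu,\bst_\setu)|^{p^\prime}\psi_\setu(\bst_\setu)^{1-p^\prime}\rd\bst_\setu<\infty$, with the evident reading $\int_{D^\setu}|K_{\bullet,\setu}(\bsx_\setu,\cdot)|\rd\bst<\infty$ when $p=\infty$.

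Next I would factorize. Since $K_{\bullet,\setu}(\bsx_\setu,\bst_\setu)=\prod_{j\in\setu}K_\bullet(x_j,t_j)$ and $\psi_\setu=\prod_{j\in\setu}\psi$, Tonelli's theorem gives
\[
\int_{D^\setu}|K_{\bullet,\setu}(\bsx_\setu,\cdot)|^{p^\prime}\psi_\setu^{1-p^\prime}\rd\bst
\,=\,\prod_{j\in\setu}I_\bullet(x_j),\qquad
I_\bullet(x)\,=\,\int_D|K_\bullet(x,t)|^{p^\prime}\psi(t)^{1-p^\prime}\rd t .
\]
Each factor is nonnegative, and for $x_j>0$ it is strictly positive, because $K_\bullet(x_j,\cdot)\not\equiv0$ and $\psi>0$ almost everywhere; therefore the product is finite for every $\bsx_\setu\in D^\setu$ if and only if $I_\bullet(x)<\infty$ for every $x\in D$. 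The point $\bsx_\setu=\bszero$ is admissible here—recall $0=\min D$—and it will carry essential information in the ANOVA case.

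It then remains to evaluate $I_\bullet$ and match the conditions. For the anchored kernel $I_{\bspitch}(x)=\int_0^x\psi^{1-p^\prime}\rd t=\int_0^x(\psi^{-1/p})^{p^\prime}\rd t$, which is finite for all $x\in D$ precisely when $\psi^{-1/p}\in L_{p^\prime}^{\mathrm{loc}}(D)$ (and equals $x<\infty$ for every $x$ when $p=\infty$); this is \eqref{eq1}. For the ANOVA kernel, using $K_{\an}(x,t)=1-\overline\psi(t)$ on $[0,x)$ and $-\overline\psi(t)$ on $[x,T)$,
\[
I_{\an}(x)\,=\,\int_0^x\bigl(1-\overline\psi\bigr)^{p^\prime}\psi^{1-p^\prime}\rd t
\,+\,\int_x^T\overline\psi^{\,p^\prime}\psi^{1-p^\prime}\rd t .
\]
At $x=0$ the first term is absent and $I_{\an}(0)=\int_D(\overline\psi\,\psi^{-1/p})^{p^\prime}\rd t$, finite exactly when $\overline\psi\cdot\psi^{-1/p}\in L_{p^\prime}(D)$ (for $p=\infty$ this reads $\int_D\overline\psi\rd t=m_\psi<\infty$, i.e.\ $\overline\psi\in L_1(D)$), which produces the global requirement in \eqref{eq2}. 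Assuming \eqref{eq2}, the first term of $I_{\an}(x)$ is bounded by $\int_0^x\psi^{1-p^\prime}$ (local) since $0\le1-\overline\psi\le1$, and the second by $I_{\an}(0)$ (global), so $I_{\an}(x)<\infty$ for all $x$; conversely, from $I_{\an}(x)<\infty$ and $I_{\an}(0)<\infty$ together with the elementary inequality $s^{p^\prime}+(1-s)^{p^\prime}\ge2^{1-p^\prime}$ on $[0,1]$,
\[
\int_0^x\psi^{1-p^\prime}\rd t\,\le\,2^{\,p^\prime-1}\bigl(I_{\an}(x)+I_{\an}(0)\bigr)\,<\,\infty ,
\]
which recovers the local requirement $\psi^{-1/p}\in L_{p^\prime}^{\mathrm{loc}}(D)$ for every $x\in D$.

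The main obstacle is conceptual rather than computational: keeping the quantifier over $\bsx_\setu$ honest. The anchored kernel vanishes at $x=0$, so $I_{\bspitch}(0)=0$ imposes nothing and \eqref{eq1} carries no global term; by contrast $K_{\an}(0,\cdot)=-\overline\psi$ does not vanish, and it is precisely the admissible endpoint $\bsx_\setu=\bszero$ that forces the extra integrability $\overline\psi\cdot\psi^{-1/p}\in L_{p^\prime}(D)$ separating \eqref{eq2} from \eqref{eq1}. A secondary point requiring care is the converse of Hölder at the endpoints $p\in\{1,\infty\}$: there the dual condition reads $K_{\bullet,\setu}\psi_\setu^{-1}\in L_\infty$ (for $p=1$) or $K_{\bullet,\setu}\in L_1$ (for $p=\infty$, with necessity witnessed simply by $g_\setu\equiv1$), and in the $p=1$ case the one–dimensional evaluations go through verbatim with $I_\bullet(x)$ replaced by $\esup_{t\in D}|K_\bullet(x,t)|/\psi(t)$, yielding the $p=1$ instances of \eqref{eq1} and \eqref{eq2}.
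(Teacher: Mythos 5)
Your argument is correct and follows essentially the same route as the paper: reduce via $L_{p,\psi}$--$L_{p^\prime,\psi}$ duality (the paper isolates the converse-H\"older step as Lemma~\ref{prop1} in the Appendix, proved by Banach--Steinhaus) to the one-dimensional condition $K_\gc(x,\cdot)/\psi\in L_{p^\prime,\psi}(D)$ for all $x\in D$, and then observe that the admissible anchor $x=0$, where $K_\an(0,\cdot)=-\overline{\psi}$, is exactly what forces the extra global integrability in \eqref{eq2}. The only cosmetic difference is in part (ii), where the paper uses the identity $K_\an(x,\cdot)=K_\bspitch(x,\cdot)-\overline{\psi}$ to reduce the ANOVA condition to the anchored one plus $\overline{\psi}/\psi\in L_{p^\prime,\psi}(D)$, while you compute the split integral directly and recover the local condition via the convexity bound $s^{p^\prime}+(1-s)^{p^\prime}\ge 2^{1-p^\prime}$; both are valid.
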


\begin{proof}
For $\gc\in\{\bspitch,\an\}$, $g_\setu\in L_{p,\psi}(D^\setu)$,
and $\bsx_\setu\in D^\setu$ we have
\begin{align*}
\int_{D^\setu}
|g_\setu(\bst_\setu)|\,
|K_{\gc,\setu}(\bsx_\setu,\bst_\setu)|
\rd\bst_\setu
\,=\,\int_{D^\setu}
|g_\setu(\bst_\setu)|\,
\frac{|K_{\gc,\setu}(\bsx_\setu,\bst_\setu)|}
{\psi_\setu(\bst_\setu)}\,
\psi_\setu(\bst_\setu)
\rd\bst_\setu.
\end{align*}
Lemma~\ref{prop1} from the Appendix shows that
\begin{align*}
\int_{D^\setu}
|g_\setu(\bst_\setu)|\,
|K_{\gc,\setu}(\bsx_\setu,\bst_\setu)|
\rd\bst_\setu
\,<\,\infty
\quad \text{for all } g_\setu\in L_{p,\psi}(D^\setu)
\text{ and } \bsx_\setu\in D^\setu
\end{align*}
if and only if
\begin{align*}
\frac{K_{\gc,\setu}(\bsx_\setu,\cdot_\setu)}{\psi_\setu}
\in L_{p^\prime,\psi}(D^\setu) 
\quad \text{for all } \bsx_\setu\in D^\setu,
\end{align*}
which is equivalent to
\begin{align}\label{e50}
\frac{K_{\gc}(x,\cdot)}{\psi}
\in L_{p^\prime,\psi}(D)
\quad \text{for all } x\in D.
\end{align}
Furthermore, we use $K_{\an}(0,\cdot) = - \overline{\psi}$ to
conclude that \eqref{e50} with $\gc=\an$ is equivalent to
\eqref{e50} with $\gc = \bspitch$ and
\begin{equation}\label{e51}
\overline{\psi}/\psi \in L_{p^\prime,\psi}(D).
\end{equation}

Consider $\gc=\bspitch$. Then we have \eqref{e50} if and only if
\[
\psi^{-1} \in L_{p^\prime,\psi}^{\mathrm{loc}}(D),
\]
which yields the claim in (i).

Consider $\gc=\an$. To establish the claim in (ii) 
one easily verifies 
that \eqref{e51} is equivalent to $\overline{\psi} \in
L_1(D)$ if $p=\infty$ and to $\overline{\psi}\cdot \psi^{-1/p} \in
L_{p^\prime}(D)$ if $p < \infty$.
\end{proof}

We comment on the conditions \eqref{eq1} and \eqref{eq2}.

\begin{remark}\label{r3}
Obviously, \eqref{eq2} implies \eqref{eq1},
and Lemma \ref{l1} yields the following monotonicity
property. If one of these conditions is satisfied for
$p=p_1$ and $\psi$, then it also holds for the same
density $\psi$ and every $p>p_1$.
Given \eqref{eq2}, we obtain 
$\overline{\psi} \in L_1(D)$, and therefore
\begin{align}\label{mean}
   m_\psi \,=\, 
\int_D\overline{\psi}(t)\rd t \,=\,
   \int_Dy\,\psi(y)\rd y \in (0, \infty).
\end{align}
\end{remark}

\begin{remark}\label{r1}
Consider the assumption \eqref{eq2} in the case $p<\infty$.
If $D$ is compact,
then $L_{p^\prime}^{\mathrm{loc}}(D) = L_{p^\prime}(D)$ 
and $\psi^{-1/p} \in
L_{p^\prime}(D)$ implies $\overline{\psi} \cdot \psi^{-1/p} \in
L_{p^\prime}(D)$.
Therefore we have equivalence of \eqref{eq1} and \eqref{eq2}
for compact sets $D$.
If $D$ is not compact,
then $\overline{\psi} \cdot \psi^{-1/p} \in L_{p^\prime}(D)$ implies
$\psi^{-1/p} \in L^{\mathrm{loc}}_{p^\prime}(D)$. 
\end{remark}

\begin{example}\label{ex1}
Consider the case of a bounded interval $D$ with
$T=1$ for simplicity. Let
\[
\psi(t) \,=\, (\alpha+1) \cdot (1-t)^\alpha 
\]
for $\alpha > -1$, so that
\[
\overline{\psi}(t) \,=\, (1-t)^{\alpha+1}.
\]
The following facts are easily verified with the help of Remark \ref{r1}.
If $D=[0,1)$, then \eqref{eq2} holds true for
every $p$.
If $D=[0,1]$, then \eqref{eq2} holds true if and only if
$p > \alpha+1$ or $p=1$ and $\alpha = 0$.
\end{example}

\begin{example}\label{ex2}
Consider the unbounded interval $D=[0,\infty)$. Let
\[
\psi(t) \,=\, (\alpha-1) \cdot (1+t)^{-\alpha}
\]
for $\alpha > 1$, so that
\[
\overline{\psi}(t) \,=\, (1+t)^{1-\alpha}.
\]
Clearly, we have \eqref{eq1} for every $p$.
With the help of Remark \ref{r1} we easily verify that
\eqref{eq2} holds true if and only if $\alpha > 2$ and
$p > 1+1/(\alpha-2)$.
\end{example}

\begin{example}\label{ex3}
Consider again $D=[0,\infty)$. Let
\[
\psi(t) \,=\, c \cdot \exp \left(-b\, t^a\right),
\]
where $a,b > 0$ and 
$c = 1/ \int_0^\infty \exp\left(-b \, t^a\right) \rd t$.
Clearly, we have \eqref{eq1} for every $p$.
We claim that \eqref{eq2} holds true if and only if $a \geq1$ or
$p>1$.
Note that
\begin{align*}
\lim_{t\to\infty} \frac{\overline\psi(t)}{t^{1-a}\exp(-bt^a)}
\,=\, c/(ab) \in (0,\infty),
\end{align*}
which follows from L'H\^{o}pital's rule. Once more, it remains
to apply Remark \ref{r1}.
\end{example}

For the rest of this section 
let
$\gc \in \{\bspitch,\an\}$ and assume that
\eqref{eq1} is satisfied if $\gc=\bspitch$
and that \eqref{eq2} is satisfied if $\gc=\an$.
For $g_\setu \in L_{p,\psi}(D^\setu)$ we put
\[
  T_{\gc,\setu}(g_\setu)\,=\,\int_{D^\setu} g_\setu(\bst_\setu)\,
    K_{\gc,\setu}(\cdot_\setu,\bst_\setu)\rd\bst_\setu,
\]
which is well defined due to Lemma \ref{l1}.

\begin{remark}\label{r4}
By $f^{(\setu)}$ we mean
   $f^{(\setu)}\,=\,
   \prod_{j\in\setu}\frac\partial{\partial x_j} f$,
where $\frac\partial{\partial x_j} f$ denotes the distributional 
derivative of $f$ with respect to $x_j$. 
Lemma \ref{l1} and Remark \ref{r3} imply that $L_{p,\psi}(D^\setu)
\subseteq L^{\mathrm{loc}}_1(D^\setu)$. It follows that 
$f = T_{\gc,\setu}(g_\setu)$ with any $g_\setu \in
L_{p,\psi}(D^\setu)$ has a weak derivative $f^{(\setu)}$ and
$f^{(\setu)} = g_\setu$.
\end{remark}

On the space $C(D^d)$ of continuous real-valued functions on $D^d$
we consider the topology of uniform convergence on compact subsets.

\begin{lemma}\label{l2}
We have $T_{\gc,\setu} (g_\setu) \in C(D^d)$ for every $g_\setu \in
L_{p,\psi}(D^\setu)$, and
the mapping 
\[
T_{\gc,\setu}: L_{p,\psi}(D^\setu) \to C(D^d)
\]
is linear, continuous, and one-to-one.
\end{lemma}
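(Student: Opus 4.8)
The plan is to establish the three properties in turn: linearity is immediate, the membership $T_{\gc,\setu}(g_\setu)\in C(D^d)$ and the continuity of the operator both rest on one quantitative Hölder estimate, and injectivity is read off from Remark~\ref{r4}.

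Linearity of $T_{\gc,\setu}$ follows at once from linearity of the integral in $g_\setu$. The key estimate I would prove is the following: whenever $[0,b]\subseteq D$ and $\bsx\in D^d$ has all coordinates $x_j$ with $j\in\setu$ lying in $[0,b]$, then
\[
|T_{\gc,\setu}(g_\setu)(\bsx)|\,\le\,\int_{D^\setu}|g_\setu(\bst_\setu)|\,|K_{\gc,\setu}(\bsx_\setu,\bst_\setu)|\rd\bst_\setu\,\le\,\|g_\setu\|_{L_{p,\psi}(D^\setu)}\,\bigl(c_{\gc,b}\bigr)^{|\setu|}.
\]
The second inequality is Hölder's inequality with exponents $p,p'$ in the weighted space, applied after inserting $\psi_\setu/\psi_\setu$ exactly as in the proof of Lemma~\ref{l1} and after using the pointwise bounds $|K_\bspitch(x,t)|\le 1_{[0,b)}(t)$ and $|K_\an(x,t)|\le 1_{[0,b)}(t)+\overline{\psi}(t)$, valid for $x\in[0,b]$. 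Because the kernel is a product over the coordinates in $\setu$, the $|\setu|$-dimensional weighted $L_{p'}$ norm of the majorant factorizes, leaving the $|\setu|$-th power of the one-dimensional constant $c_{\bspitch,b}=\|1_{[0,b)}/\psi\|_{L_{p',\psi}(D)}$, respectively $c_{\an,b}=\|(1_{[0,b)}+\overline{\psi})/\psi\|_{L_{p',\psi}(D)}$. These constants are finite precisely by the conditions of Lemma~\ref{l1}: \eqref{eq1} controls the $1_{[0,b)}/\psi$ part, and for $\gc=\an$ the additional term is handled by \eqref{eq2} (equivalently \eqref{e51}); the case $p=\infty$ is treated separately, using $\overline{\psi}\in L_1(D)$ from Remark~\ref{r3}.

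From this estimate I would deduce both remaining claims. For continuity of each $T_{\gc,\setu}(g_\setu)$ at a point $\bsx^0$, take $\bsx^{(n)}\to\bsx^0$; all relevant coordinates eventually lie in a common $[0,b]\subseteq D$, so $|g_\setu(\bst_\setu)|\prod_{j\in\setu}\bigl(1_{[0,b)}(t_j)+\overline{\psi}(t_j)\bigr)$ is an integrable majorant, its integrability being exactly the Hölder bound just proved. Since $K_{\gc,\setu}(\bsx^{(n)}_\setu,\bst_\setu)\to K_{\gc,\setu}(\bsx^0_\setu,\bst_\setu)$ for every $\bst_\setu$ outside the null set on which some $t_j=x^0_j$, dominated convergence yields $T_{\gc,\setu}(g_\setu)(\bsx^{(n)})\to T_{\gc,\setu}(g_\setu)(\bsx^0)$, so $T_{\gc,\setu}(g_\setu)\in C(D^d)$. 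For continuity of the linear operator into $C(D^d)$ with the topology of uniform convergence on compacta, observe that any compact $Q\subseteq D^d$ lies in a box whose $\setu$-coordinates range over some $[0,b]\subseteq D$; the estimate then gives $\sup_{\bsx\in Q}|T_{\gc,\setu}(g_\setu)(\bsx)|\le (c_{\gc,b})^{|\setu|}\,\|g_\setu\|_{L_{p,\psi}(D^\setu)}$, which bounds every defining seminorm of the image by a constant multiple of the domain norm.

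Finally, injectivity is immediate from Remark~\ref{r4}: the function $f=T_{\gc,\setu}(g_\setu)$ has weak derivative $f^{(\setu)}=g_\setu$, so $T_{\gc,\setu}(g_\setu)=0$ forces $g_\setu=0$ in $L_{p,\psi}(D^\setu)$. I expect the only delicate part to be the bookkeeping in the Hölder estimate — securing an integrable majorant uniformly over a neighborhood of $\bsx^0$ and over $Q$ — since that is where \eqref{eq1} and \eqref{eq2} are genuinely used and where the cases $\gc=\bspitch$ versus $\gc=\an$ and $p<\infty$ versus $p=\infty$ must be kept apart; the convergence argument and the injectivity step are then routine.
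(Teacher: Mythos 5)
Your proposal is correct and follows essentially the same route as the paper: linearity is immediate, the operator bound comes from the H\"older estimate with the one-dimensional constants $\sup_{x\in[0,b]}\|K_\gc(x,\cdot)/\psi\|_{L_{p^\prime,\psi}(D)}$ made finite by \eqref{eq1} resp.\ \eqref{eq2}, and injectivity is read off from Remark~\ref{r4}. The only cosmetic difference is in the pointwise continuity of $T_{\gc,\setu}(g_\setu)$, where you use dominated convergence with a fixed local majorant, while the paper bounds the increment by $\int_{D^\setu}|g_\setu|\,|K_{\bspitch,\setu}(\bsx_\setu,\cdot)-K_{\bspitch,\setu}(\bsy_\setu,\cdot)|$ and invokes $g_\setu\in L_1^{\mathrm{loc}}(D^\setu)$; both are routine.
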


\begin{proof}
We obtain $T_{\gc,\setu} (g_\setu) \in C(D^d)$ from
\[
\left| 
(T_{\gc,\setu} (g_\setu)) (\bsx_\setu) -
(T_{\gc,\setu} (g_\setu)) (\bsy_\setu) 
\right| 
\leq
\int_{D^\setu} |g_\setu(\bst_\setu)|\, 
|K_{\bspitch,\setu}(\bsx_\setu,\bst_\setu) -
K_{\bspitch,\setu}(\bsy_\setu,\bst_\setu)|
\rd\bst_\setu 
\]
and $g_\setu \in L_{1}^{\mathrm{loc}}(D^\setu)$, see Remark \ref{r4}.
Linearity of $T_{\gc,\setu}$ obviously holds. To prove continuity
of $T_{\gc,\setu}$ it suffices to show that
\[
\sup_{x \in [0,y]} \left\| \frac{K_\gc(x,\cdot)}{\psi}
\right\|_{L_{p^\prime,\psi}(D)}  < \infty
\]
for every $y \in D$, cf.\ the proof of Lemma \ref{l1}.
The latter property holds true due to \eqref{eq1} if $\gc =
\bspitch$ or \eqref{eq2} if $\gc=\an$.
It remains to show that $T_{\gc,\setu}$ is one-to-one. 
For this purpose consider $g_\setu$ such that $T_{\gc,\setu}(g_\setu)
=0$. 
Remark \ref{r4} yields $g_\setu=0$,
which completes the proof. 
\end{proof}

\section{Anchored and ANOVA Spaces}\label{s3}

As previously, let $\gc \in \{\bspitch,\an\}$, and assume that
\eqref{eq1} is satisfied if $\gc=\bspitch$
and that \eqref{eq2} is satisfied if $\gc=\an$.

For $\setu\not=\emptyset$ we define the spaces
\begin{align*}
F_{\gc,p,\psi,\setu}
\,=\,
T_{\gc,\setu}(L_{p,\psi}(D^\setu)).
\end{align*}
For $\setu = \emptyset$ and $c \in \bbR$
we put $L_p(D^\setu) = L_{p,\psi}(D^\setu) = \bbR$ with 
$\|c\|_{L_{p,\psi}(D^\setu)} = |c|$, and
$T_{\gc,\setu}(c)$ denotes the constant function with value $c$.
Hence $F_{\gc,p,\psi,\emptyset} = 
T_{\gc,\emptyset}(L_{p,\psi}(D^\emptyset))$ is the space of constant
functions on $D^d$.

Let $\setu\subseteq [1:d]$ and $f\in F_{\gc,p,\psi,\setu}$.
Note that $f(\bsx)$ depends on $\bsx$ only through $\bsx_\setu$.
Furthermore, 
\begin{align*}
f(\bsx)
\,=\,
0
\quad
\text{if}
\quad
x_j
\,=\,
0
\quad
\text{for any}
\quad
j\in\setu,
\end{align*}
if $\gc=\bspitch$,
and
\begin{align*}
\int_D
f(\bsx)
\,\psi(x_j)\rd x_j
\,=\,
0
\quad
\text{if}
\quad
j\in\setu,
\end{align*}
if $\gc=\an$, for any $f\in F_{\gc,p,\psi,\setu}$.
This leads directly to the following lemma;
see \cite[Thm.~2.1]{KSWW09b}
for a general result on decomposition of functions.

\begin{lemma}\label{decompwelldef}
Let $f_{\gc,\setu},\tilde f_{\gc,\setu} \in F_{\gc,p,\psi,\setu}$
for $\setu\subseteq [1:d]$.
Then we have
\begin{align*}
\sum_{\setu\subseteq [1:d]} f_{\gc,\setu}
=\sum_{\setu\subseteq [1:d]} \tilde f_{\gc,\setu}
\qquad
\text{if and only if}
\qquad
f_{\gc,\setu}
=\tilde f_{\gc,\setu}
\quad
\text{for all}
\quad
\setu\subseteq [1:d].
\end{align*}
\end{lemma}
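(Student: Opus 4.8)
The forward direction of the equivalence is trivial, so the whole content lies in the reverse implication. Since each $F_{\gc,p,\psi,\setu}$ is a linear subspace of $C(D^d)$ (being the range of the linear map $T_{\gc,\setu}$, see Lemma~\ref{l2}), I may replace the pair $(f_{\gc,\setu},\tilde f_{\gc,\setu})$ by its difference and reduce to the following statement: if $f_\setu\in F_{\gc,p,\psi,\setu}$ for every $\setu\subseteq[1:d]$ and $\sum_{\setu\subseteq[1:d]}f_\setu=0$, then $f_\setu=0$ for all $\setu$.

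The plan is to peel off the components one coordinate at a time by means of annihilation operators. For an index $k\in[1:d]$ let $Q_k$ act on functions on $D^d$ by $Q_k f(\bsx)=f(\bsx)|_{x_k=0}$ in the anchored case $\gc=\bspitch$, and by $Q_kf(\bsx)=\int_D f(\bsx)\,\psi(x_k)\rd x_k$ in the ANOVA case $\gc=\an$. The two displayed properties stated just before the lemma translate precisely into the assertion that, for $f_\setu\in F_{\gc,p,\psi,\setu}$,
\[
Q_k f_\setu=0 \ \text{if}\ k\in\setu,
\qquad
Q_k f_\setu=f_\setu \ \text{if}\ k\notin\setu;
\]
in the second case $f_\setu$ does not depend on $x_k$ and $\psi$ integrates to one. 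Both operators are linear and distribute over finite sums term by term.

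Now fix $\setw\subseteq[1:d]$ and apply $Q_k$ successively, for $k$ running through $[1:d]\setminus\setw$, to the identity $\sum_\setu f_\setu=0$. Each application deletes exactly those summands $f_\setu$ with $k\in\setu$ and leaves the rest unchanged, so once all indices outside $\setw$ have been processed one is left with the identity
\[
\sum_{\setu\subseteq\setw}f_\setu=0,
\]
viewed as an identity of functions of $\bsx_\setw$ (in the anchored case it is simply the restriction of $\sum_\setu f_\setu$ to the face $\{x_k=0:k\notin\setw\}$). I then finish by induction on $|\setw|$: for $\setw=\emptyset$ the identity reads $f_\emptyset=0$, and for the inductive step every proper subset of $\setw$ has strictly smaller cardinality, so the induction hypothesis makes all summands with $\setu\subsetneq\setw$ vanish and leaves $f_\setw=0$. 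Since $\setw$ was arbitrary, all components vanish.

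The one point that needs care is the legitimacy of the integrations in the ANOVA case. I avoid any two-dimensional Fubini argument by integrating out a single coordinate at a time: each intermediate expression is again a finite sum of elements of spaces $F_{\an,p,\psi,\setv}$, and for such functions the relevant single-variable integral against $\psi$ is finite and obeys the annihilation/invariance dichotomy above by the very construction of these spaces (Lemma~\ref{l1} with \eqref{eq2}). Everything else is routine bookkeeping over the subset lattice.
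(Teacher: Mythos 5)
Your proof is correct and is essentially the argument the paper intends: the paper gives no written proof, stating only that the two annihilation/invariance properties displayed immediately before the lemma "lead directly" to it (citing \cite{KSWW09b} for the general decomposition result), and your peeling argument via the operators $Q_k$ together with induction over the subset lattice is precisely the standard way those properties yield uniqueness of the decomposition. Nothing further is needed.
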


We identify the elements $(f_{\gc,\setu})_{\setu \in [1:d]}$ of the 
direct sum
$\bigoplus_{\setu\in [1:d]} F_{\gc,p,\psi,\setu}$
with the continuous functions 
$\sum_{\setu\in [1:d]} f_{\gc,\setu}$ on $D^d$,
which is possible due to Lemma~\ref{decompwelldef}. 
The representation
\begin{equation}\label{g20}
f \,=\, \sum_{\setu\subseteq [1:d]} f_{\gc,\setu}
\,=\, \sum_{\setu\subseteq [1:d]} T_{\gc,\setu} (g_{\gc,\setu})
\end{equation}
with $g_{\gc,\setu} \in L_{p,\psi}(D^\setu)$ and
$f_{\gc,\setu} = T_{\gc,\setu} (g_{\setu})$ is called
the \emph{anchored decomposition} of $f$ in the case $\gc = \bspitch$
and the \emph{ANOVA decomposition} of $f$ in the case 
$\gc = \an$. We provide an explicit relation between the
corresponding functions $g_{\gc,\setu}$ in these two
decompositions. Put 
\[
\overline{\psi}_\setw (\bst_\setw) = 
\prod_{j\in\setw}\overline{\psi}(t_j).
\]
For convenience of notation we set
$\int_{D^\setw} f(\bst_\setw) \rd \bst_\setw = f$ for $\setw =
\emptyset$.

\begin{lemma}\label{lem:2}
Assume that \eqref{eq2} is satisfied. 
If $\setw\subseteq \setu^c$ and
$g_{\gc,\setu \cup \setw }\in L_{p,\psi}(D^{\setu \cup \setw})$,
then
\begin{equation}\label{gl1}
\left.
\begin{aligned}
&g_{\gc,\setu\cup\setw}(\bsx_\setu;\cdot_\setw)\cdot
\overline{\psi}_\setw (\cdot_\setw) \in L_1(D^\setw)
\ \text{for a.e.\ $\bsx_\setu$ if $\setu \neq \emptyset$}
\quad \mbox{and}\\
&
\int_{D^\setw}
g_{\gc,\setu\cup\setw}(\cdot_\setu;\bst_\setw)\cdot
\overline{\psi}_\setw (\bst_\setw)\rd\bst_\setw
\in L_{p,\psi}(D^\setu).
\end{aligned}\quad
\right\}
\end{equation}
Moreover, let $g_{\bspitch,\setu}, g_{A,\setu} \in
L_{p,\psi}(D^\setu)$ for every $\setu$. Then
\begin{align}\label{gl2}
\sum_{\setu\subseteq[1:d]}
T_{\bspitch,\setu}(g_{\bspitch,\setu})
\,=\,
\sum_{\setu\subseteq[1:d]}
T_{A,\setu}(g_{A,\setu})
\end{align}
if and only if
\begin{align}\label{11}
g_{\bspitch,\setu}
\,=\,
\sum_{\setw\subseteq\setu^c}
(-1)^{|\setw|}\,
\int_{D^\setw}g_{A,\setu\cup\setw}(\cdot_\setu;\bst_\setw)\,
\overline{\psi}_\setw (\bst_\setw)\rd\bst_\setw
\end{align}
and
\begin{align}\label{22}
g_{A,\setu}
\,=\,
\sum_{\setw\subseteq\setu^c}
\int_{D^\setw}g_{\bspitch,\setu\cup\setw}(\cdot_\setu;\bst_\setw)\,
\overline{\psi}_\setw (\bst_\setw)\rd\bst_\setw.
\end{align}
\end{lemma}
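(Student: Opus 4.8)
The plan is to handle the two assertions in turn, proving first the integrability statement \eqref{gl1}, which is the analytic backbone that legitimises every formal manipulation in the second part. Fix $\setw\subseteq\setu^c$ and $g=g_{\gc,\setu\cup\setw}\in L_{p,\psi}(D^{\setu\cup\setw})$. The idea is to factor the weight as $\overline{\psi}_\setw=(\overline{\psi}_\setw/\psi_\setw)\cdot\psi_\setw$ and apply H\"older's inequality in the $\bst_\setw$-variables with respect to the probability measure $\psi_\setw\rd\bst_\setw$. For $p<\infty$ this yields
\[
\Bigl|\int_{D^\setw}g(\bsx_\setu;\bst_\setw)\,\overline{\psi}_\setw(\bst_\setw)\rd\bst_\setw\Bigr|
\le \|g(\bsx_\setu;\cdot_\setw)\|_{L_{p,\psi}(D^\setw)}\cdot
\|\overline{\psi}/\psi\|_{L_{p^\prime,\psi}(D)}^{|\setw|},
\]
where the last factor is finite by \eqref{e51}, i.e.\ by \eqref{eq2}, and factorises over $\setw$ by Fubini; the case $p=\infty$ is handled directly using $\overline{\psi}\in L_1(D)$. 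Since $g(\bsx_\setu;\cdot_\setw)\in L_{p,\psi}(D^\setw)$ for a.e.\ $\bsx_\setu$, the first line of \eqref{gl1} follows, and raising the bound to the $p$-th power, integrating against $\psi_\setu$, and applying Fubini once more gives the $L_{p,\psi}(D^\setu)$-membership in the second line, with norm at most $\|\overline{\psi}/\psi\|_{L_{p^\prime,\psi}(D)}^{|\setw|}\,\|g\|_{L_{p,\psi}(D^{\setu\cup\setw})}$.

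For the equivalence I would exploit the pointwise factorisation of the kernels. Since $K_\an(x,t)=K_\bspitch(x,t)-\overline{\psi}(t)$, expanding the product defining $K_{\an,\setu}$ gives
\[
K_{\an,\setu}(\bsx_\setu,\bst_\setu)=\sum_{\setv\subseteq\setu}(-1)^{|\setu\setminus\setv|}\,
K_{\bspitch,\setv}(\bsx_\setv,\bst_\setv)\,\overline{\psi}_{\setu\setminus\setv}(\bst_{\setu\setminus\setv}).
\]
Substituting this into $T_{A,\setu}(g_{A,\setu})$, splitting the integral over $D^\setu=D^\setv\times D^{\setu\setminus\setv}$ (Fubini being justified by the H\"older bound above together with \eqref{eq1}), and pulling $K_{\bspitch,\setv}$ out of the inner integral, I obtain
\[
T_{A,\setu}(g_{A,\setu})=\sum_{\setv\subseteq\setu}(-1)^{|\setu\setminus\setv|}\,
T_{\bspitch,\setv}\Bigl(\int_{D^{\setu\setminus\setv}}g_{A,\setu}(\cdot_\setv;\bst_{\setu\setminus\setv})\,
\overline{\psi}_{\setu\setminus\setv}(\bst_{\setu\setminus\setv})\rd\bst_{\setu\setminus\setv}\Bigr),
\]
the inner argument lying in $L_{p,\psi}(D^\setv)$ precisely by \eqref{gl1}. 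Summing over $\setu$ and interchanging the double summation $\sum_{\setu}\sum_{\setv\subseteq\setu}$ into $\sum_{\setv}\sum_{\setw\subseteq\setv^c}$ (with $\setw=\setu\setminus\setv$, so that $(-1)^{|\setu\setminus\setv|}=(-1)^{|\setw|}$), the coefficient of $T_{\bspitch,\setv}$ becomes exactly the right-hand side of \eqref{11}. Hence \eqref{11} implies \eqref{gl2}; conversely, comparing this representation with the given anchored decomposition and invoking the uniqueness furnished by Lemma~\ref{decompwelldef} forces \eqref{11}. This proves \eqref{gl2}$\iff$\eqref{11}.

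The relation \eqref{22} is obtained symmetrically: writing $K_\bspitch(x,t)=K_\an(x,t)+\overline{\psi}(t)$ and expanding $K_{\bspitch,\setu}=\sum_{\setv\subseteq\setu}K_{\an,\setv}(\bsx_\setv,\bst_\setv)\,\overline{\psi}_{\setu\setminus\setv}(\bst_{\setu\setminus\setv})$ (now with all signs positive) leads, after the same splitting and reindexing, to $\sum_{\setu}T_{\bspitch,\setu}(g_{\bspitch,\setu})=\sum_\setv T_{A,\setv}(\tilde g_{A,\setv})$, where $\tilde g_{A,\setv}$ is the right-hand side of \eqref{22}; uniqueness of the ANOVA decomposition (Lemma~\ref{decompwelldef} with $\gc=\an$) then yields \eqref{gl2}$\iff$\eqref{22}. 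Combining the two equivalences gives the lemma. I expect the only genuine difficulty to be analytic rather than combinatorial: one must guarantee that every interchange of sum and integral is licit and that the intermediate functions land in the correct $L_{p,\psi}$ spaces, which is exactly what the integrability part \eqref{gl1} and the hypothesis \eqref{eq2} secure; the bookkeeping in the reindexing step is then routine.
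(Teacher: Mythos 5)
Your proposal is correct, and the combinatorial half (the kernel expansion $K_{\an,\setu}=\sum_{\setv\subseteq\setu}(-1)^{|\setu\setminus\setv|}K_{\bspitch,\setv}\,\overline{\psi}_{\setu\setminus\setv}$, Fubini, reindexing $\setw=\setu\setminus\setv$, and uniqueness via Lemma~\ref{decompwelldef} together with injectivity of $T_{\bspitch,\setv}$ from Lemma~\ref{l2}) is essentially identical to the paper's argument; the paper likewise dispatches \eqref{22} by symmetry. Where you genuinely diverge is in the proof of \eqref{gl1}: the paper argues by duality, pairing $\int_{D^\setw}|g|\,\overline{\psi}_\setw$ against an arbitrary $h\in L_{p^\prime,\psi}(D^\setu)$ and then invoking the appendix Lemma~\ref{prop1} (whose proof runs through Banach--Steinhaus) to conclude membership in $L_{p,\psi}(D^\setu)$, whereas you prove the same bound directly via H\"older with respect to the probability measure $\psi_\setw\rd\bst_\setw$ followed by integration of the $p$-th power in $\bsx_\setu$ (an integral Minkowski/Fubini step). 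Both routes rest on exactly the same finiteness $\|\overline{\psi}/\psi\|_{L_{p^\prime,\psi}(D)}=\|\overline{\psi}\cdot\psi^{-1/p}\|_{L_{p^\prime}(D)}<\infty$ from \eqref{eq2}, and both yield the same quantitative bound $c^{|\setw|}\|g\|_{L_{p,\psi}(D^{\setu\cup\setw})}$; your version is more elementary and self-contained (it avoids the appendix lemma entirely), at the cost of having to note explicitly that $g(\bsx_\setu;\cdot_\setw)\in L_{p,\psi}(D^\setw)$ for a.e.\ $\bsx_\setu$ and that the resulting function of $\bsx_\setu$ is measurable, both of which are routine Fubini--Tonelli facts.
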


\begin{proof}
In the proof of \eqref{gl1} we consider the non-trivial case 
$\setw \neq \emptyset$.
Let $h \in L_{p^\prime,\psi}(D^{\setu})$, and put
\[
\tilde{h} (\bsx_\setu;\bst_\setw) =
g_{\gc,\setu\cup\setw}(\bsx_\setu;\bst_\setw)
\, h(\bsx_\setu).
\]
It follows that 
$\tilde{h}(\cdot_\setu;\bst_\setw) \,
\psi_\setu(\cdot_\setu) \in L_1(D^\setu)$ 
for a.e.\ $\bst_\setw$ and 
\[
\int_{D^\setu} |\tilde{h} (\bsx_\setu,\cdot_{\setw})|
\, \psi_\setu (\bsx_\setu) \rd \bsx_\setu 
\in L_{p,\psi}(D^\setw).
\]
Consider the case $p < \infty$, and
put $c = \|\overline{\psi} \cdot \psi^{-1/p}\|_{L_{p^\prime}(D)} <
\infty$, see \eqref{eq2}. We obtain 
\begin{align*}
&\int_{D^\setu} \!
\int_{D^\setw}
|g_{\gc,\setu\cup\setw}(\bsx_\setu;\bst_\setw)|\,
\overline{\psi}_\setw(\bst_\setw)\rd\bst_\setw
\cdot |h(\bsx_\setu)| \, \psi_\setu (\bsx_\setu) \rd \bsx_\setu \\
&\qquad = \int_{D^\setw} \!  \int_{D^\setu}
|\tilde{h}(\bsx_\setu;\bst_\setw)|\, \psi_\setu (\bsx_\setu) 
\rd\bsx_\setu \cdot
\psi_\setw^{1/p} (\bst_\setw) \cdot
\overline{\psi}_\setw(\bst_\setw) / \psi_\setw^{1/p} (\bst_\setw)
\rd \bst_\setw \\
&\qquad \leq 
\left\| \int_{D^\setu}
|\tilde{h}(\bsx_\setu;\cdot_\setw)|\, \psi_\setu (\bsx_\setu) 
\rd\bsx_\setu \cdot \psi_\setw^{1/p} (\cdot_\setw)
\right\|_{L_p(D^\setw)} \cdot c^{|\setw|} \\
&\qquad < \infty.
\end{align*}
Lemma~\ref{prop1} from the Appendix shows that
\[
\int_{D^\setw}
|g_{\gc,\setu\cup\setw}(\cdot_\setu;\bst_\setw)|\,
\overline{\psi}_\setw(\bst_\setw)\rd\bst_\setw
\in L_{p,\psi}(D^\setu),
\]
and hereby we get \eqref{gl1}.
For $p=\infty$ the proof of \eqref{gl1} is straightforward
and thus omitted.

Now we prove the equivalence of \eqref{gl2} and \eqref{11}. Take any 
$f_{\an,\setu}=T_{\an,\setu}(g_{\an,\setu})\in F_{\an,p,\psi,\setu}$. Then 
\begin{align*}
 f_{\an,\setu}(\bsx) &= \int_{D^\setu} g_{\an,\setu}(\bst_\setu)\,
    K_{\an,\setu} (\bsx_\setu,\bst_\setu) \rd\bst_\setu\\
  &=\int_{D^\setu}g_{A,\setu}(\bst_\setu)\,\sum_{\setv\subseteq\setu}
     K_{\bspitch,\setv} (\bsx_\setv,\bst_\setv) 
(-1)^{|\setu|-|\setv|} \overline{\psi}_{\setu \setminus \setv} 
(\bst_{\setu \setminus \setv})    \rd\bst_\setu\\
  &= \sum_{\setv\subseteq\setu} (-1)^{|\setu|-|\setv|} 
	      \int_{D^\setv} K_{\bspitch,\setv} (\bsx_\setv,\bst_\setv)  
				\left(\int_{D^{\setu\setminus\setv}} 
g_{\an,\setu}(\bst_\setv;\bst_{\setu\setminus\setv})\,
      \overline{\psi}_{\setu \setminus \setv} 
(\bst_{\setu \setminus \setv})    
\rd\bst_{\setu \setminus\setv} \right) \rd\bst_\setv\\
  &= \sum_{\setv\subseteq\setu}T_{\bspitch,\setv}(h_{\setu,\setv}),
\end{align*}
where
\[
  h_{\setu,\setv}(\bst_\setv)\,=\,(-1)^{|\setu|-|\setv|}\,
  \int_{D^{\setu\setminus\setv}} 
g_{\an,\setu}(\bst_\setv;\bst_{\setu\setminus\setv})\,
      \overline{\psi}_{\setu \setminus \setv} 
(\bst_{\setu \setminus \setv})    \rd\bst_{\setu \setminus\setv}.
\]
Note that the appearing integrals are well defined by \eqref{gl1}.
Furthermore, we get 
\begin{align*}
 \sum_{\setu\subseteq[1:d]}T_{A,\setu}(g_{A,\setu})
 &=
 \sum_{\setu\subseteq[1:d]}\sum_{\setv\subseteq\setu} 
  T_{\bspitch,\setv}(h_{\setu,\setv}) \,=\, 
  \sum_{\setv\subseteq[1:d]}\sum_{\setw\subseteq\setv^c}
T_{\bspitch,\setv}
   (h_{\setv\cup\setw,\setv})\\
 &= 
  \sum_{\setv\subseteq[1:d]}
T_{\bspitch,\setv}\left(\sum_{\setw\subseteq
   \setv^c}h_{\setv\cup\setw,\setv}\right).
\end{align*}
Given \eqref{gl2},
Lemma \ref{l2} and Lemma \ref{decompwelldef} imply \eqref{11}.
Conversely, if $g_{\bspitch,\setu}$ is given by \eqref{11},
then we obtain \eqref{gl2}. The equivalence of \eqref{gl2} and \eqref{22} 
can be established in the same way.
\end{proof}

\begin{example}\label{ex4}
Assume that \eqref{eq2} is satisfied, and let $\eta_\setu \in \bbR$ 
as well as 
\[
g_\setu(\bsx) \,=\, \prod_{j \in \setu} g(x_j),
\]
where $g \in L_{p,\psi}(D)$. Moreover, let
\begin{equation}\label{g22}
f_\gc = \sum_{\setu \subseteq [1:d]} \eta_\setu T_{\gc,\setu}(g_\setu).
\end{equation}
The right-hand side in \eqref{g22}
is the anchored or ANOVA decomposition, respectively, of
$f_\gc$, and its components $\eta_\setu T_{\gc,\setu}(g_\setu)$ are
of tensor product form.
We use Lemma \ref{lem:2} 
to compute the ANOVA decomposition of $f_\bspitch$ and the anchored
decomposition of $f_\an$. 
Since $\overline{\psi}(t) = K_\bspitch(x,t) -
K_\an(x,t)$, we get $g \cdot \overline{\psi} \in L_1(D)$ from
Lemma \ref{l1}. Put 
\[
c \,=\, \int_D g(t) \cdot \overline{\psi}(t)\rd t.
\]
We obtain 
\[
g_{\an,\setu} \,=\, 
\sum_{\setw\subseteq\setu^c} \eta_{\setu \cup \setw}\, c^{|\setw|} 
\cdot g_{\setu},
\]
and therefore
\[
f_\bspitch \,=\,
\sum_{\setu \subseteq [1:d]} 
\sum_{\setw\subseteq\setu^c}
\eta_{\setu\cup\setw} \cdot c^{|\setw|} \cdot T_{\an,\setu}
(g_\setu)
\]
is the ANOVA decomposition of $f_\bspitch$. In the same way
we obtain
\[
f_\an =
\sum_{\setu \subseteq [1:d]} 
\sum_{\setw\subseteq\setu^c}
\eta_{\setu\cup\setw} \cdot (-c)^{|\setw|} \cdot T_{\bspitch,\setu}
(g_\setu)
\]
as the anchored decomposition of $f_\an$.
In both cases, the components of the new decomposition are
again of tensor product form.
\end{example}

For $\setu \neq \emptyset$ we endow the spaces $F_{\gc,p,\psi,\setu}$
with the norms
\begin{align*}
\|T_{\gc,\setu}(g_\setu)\|_{F_{\gc,p,\psi,\setu}}
\,=\,
\|g_\setu\|_{L_{p,\psi}(D^\setu)},
\end{align*}
which are well defined due to Lemma~\ref{l2}.
Moreover, the space $F_{\gc,p,\psi,\emptyset}$ of constant
functions is equipped with its natural norm.

Consider a family $\bsgamma=(\gamma_\setu)_{\setu\subseteq[1:d]}$ of 
non-negative numbers, called \emph{weights}, and put
\begin{align*}
\setU_\bsgamma
\,=\,
\{\setu\subseteq [1:d]\,:\,\gamma_\setu>0\}.
\end{align*}
Henceforth we assume that $\setU_\bsgamma \neq \emptyset$.
{Let $1 \leq q \leq \infty$.
For any family $(a_\setu)_\setu = (a_\setu)_{\setu \in
\setU_\bsgamma}$ of real numbers we put 
\[
\left|(a_\setu)_\setu\right|_q = 
\left( \sum_{\setu\in\setU_\bsgamma} |a_\setu|^q \right)^{1/q}
\]
if $q < \infty$ and
\[
\left|(a_\setu)_\setu\right|_\infty = 
\max_{\setu\in\setU_\bsgamma} |a_\setu|
\]
if $q=\infty$.
}

We endow the function spaces 
\[
W_{\gc,p,q,\psi,\bsgamma} = 
\bigoplus_{\setu\in\setU_\bsgamma} F_{\gc,p,\psi,\setu}
\]
with the following $\bsgamma$-weighted 
anchored and ANOVA norms. 
For $f \in W_{\gc,p,q,\psi,\bsgamma}$
given by \eqref{g20}, i.e., $f_{\gc,\setu}=0$ for $u \neq
\setU_\bsgamma$, we put
\[
\|f\|_{W_{\gc,p,q,\psi,\bsgamma}}
\,=\,
\left| \left( 
\|f_{\gc,\setu}\|_{F_{\gc,p,\psi,\setu}}/\gamma_\setu
\right)_\setu \right|_q
\,=\,
\left| \left( 
\|g_{\gc,\setu}\|_{L_{p,\psi}(D^\setu)}/\gamma_\setu
\right)_\setu \right|_q.
\]
Note that $\|\cdot\|_{W_{\gc,\infty,q,\psi,\bsgamma}}$
does not depend on $\psi$.
Clearly, all spaces $W_{\gc,p,q,\psi,\bsgamma}$,
equipped
with the corresponding norm $\|\cdot\|_{W_{\gc,p,q,\psi,\bsgamma}}$,
are Banach spaces,
which are continuously embedded into $C(D^d)$, see Lemma \ref{l2}.
In particular,
every point evaluation $f \mapsto f(\bsx)$
on $W_{\gc,p,q,\psi,\bsgamma}$ is continuous.
We refer to $W_{\bspitch,p,q,\psi,\bsgamma}$ and
$W_{A,p,q,\psi,\bsgamma}$ as 
$\bsgamma$-weighted \emph{anchored} and \emph{ANOVA spaces}, 
respectively. 

\begin{remark}\label{r5}
In the particular case $D=[0,1]$, $\psi = 1$, and $p=q$,
the spaces 
$W_{\gc,p,p,1,\bsgamma}$ have been studied in \cite{HeRiWa14}.
Actually, these spaces were introduced via 
differentiability properties of their elements $f \in L_p(D^d)$,
namely, the existence of all weak derivatives $f^{(\setu)}$ in $L_p(D^d)$
together with
\[
\gamma_\setu = 0 \quad \Rightarrow \quad
f^{(\setu)} (\cdot_\setu;\bszero_{\setu^c}) = 0
\]
in the case $\gc=\bspitch$ and
\[
\gamma_\setu = 0 \quad \Rightarrow \quad
\int_{D^{\setu^c}} f^{(\setu)} (\cdot_\setu;\bst_{\setu^c})\rd
\bst_{\setu^c} = 0
\]
in the case $\gc=\an$. See \cite[Lem.~3, Rem.~5]{HeRiWa14} for
the definition of $f^{(\setu)} (\cdot_\setu;\bst_{\setu^c})$
and \cite[Prop.~11]{HeRiWa14} for the equivalence of the
approaches from \cite{HeRiWa14} and from the present paper in the 
particular case mentioned above.
Moreover, the components of the anchored decomposition of
$f$ are given by
\[
f_{\bspitch,\setu} = \int_{D^\setu} 
f^{(\setu)} (\bst_\setu;\bszero_{\setu^c}) \, K_{\bspitch,\setu}
(\cdot_\setu; \bst_{\setu}) \rd \bst_{\setu}
\] 
and the components of the ANOVA decomposition of $f$ are given
by
\[
f_{\an,\setu} = \int_{D^\setu} \! \int_{D^{\setu^c}}
f^{(\setu)} (\bst_\setu;\bst_{\setu^c})\rd \bst_{\setu^c}
\, K_{\an,\setu}
(\cdot_\setu; \bst_{\setu}) \rd \bst_{\setu},
\]
see \cite[Prop.~7]{HeRiWa14}.
These results should extend to the case of a general
domain $D$, a general weight function $\psi$, and arbitrary $p$ and $q$. 
\end{remark}

\section{Equivalence of Norms}\label{s4}

Let $p,q\in[1,\infty]$ and let 
$\bsgamma=(\gamma_\setu)_{\setu\subseteq[1:d]}$ be a family
of weights such that $\setU_\bsgamma \neq \emptyset$. Furthermore, 
we assume \eqref{eq2}.

Similar to \cite[Prop.~13]{HeRiWa14}, we have the following
proposition, whose proof is provided for completeness.

\begin{proposition}
The $\bsgamma$-weighted anchored and ANOVA spaces are equal, i.e.,
$W_{\bspitch,p,q,\psi,\bsgamma}=W_{\an,p,q,\psi,\bsgamma}$
(as vector spaces), if and only if the following holds:
\begin{equation}\label{monot}
  \gamma_\setw\,>\,0\quad\mbox{implies that}\quad
  \gamma_\setu\,>\,0\mbox{\ for all\ }\setu\subseteq\setw.
\end{equation}
Moreover, if \eqref{monot} does not hold then 
\[
  W_{\bspitch,p,q,\psi,\bsgamma}\,\not\subseteq\,  W_{A,p,q,\psi,\bsgamma}
  \quad\mbox{and}\quad
  W_{A,p,q,\psi,\bsgamma},\not\subseteq\,  W_{\bspitch,p,q,\psi,\bsgamma}. 
\]
\end{proposition}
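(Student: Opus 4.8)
The plan is to prove both inclusions using the explicit change-of-decomposition formulas \eqref{11} and \eqref{22} from Lemma~\ref{lem:2}, and to establish the failure direction via the tensor-product examples from Example~\ref{ex4}. Throughout, recall that membership $f\in W_{\gc,p,q,\psi,\bsgamma}$ means precisely that the $\gc$-decomposition \eqref{g20} of $f$ has $g_{\gc,\setu}=0$ for every $\setu\notin\setU_\bsgamma$, and that this decomposition is unique by Lemma~\ref{decompwelldef}.

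For the \emph{if} direction, assume \eqref{monot}. The crucial observation is that \eqref{monot} is equivalent to its contrapositive form: $\gamma_\setu=0$ implies $\gamma_\setv=0$ for every $\setv\supseteq\setu$. Now take $f\in W_{\an,p,q,\psi,\bsgamma}$, so its ANOVA components satisfy $g_{A,\setv}=0$ whenever $\gamma_\setv=0$. I would compute its anchored decomposition via \eqref{11}: each component $g_{\bspitch,\setu}$ is a signed sum of integrals involving only the ANOVA components $g_{A,\setu\cup\setw}$ with $\setw\subseteq\setu^c$, i.e.\ only components indexed by \emph{supersets} of $\setu$. If $\gamma_\setu=0$, then the contrapositive form of \eqref{monot} forces $\gamma_{\setu\cup\setw}=0$ for all such $\setw$, whence every summand vanishes and $g_{\bspitch,\setu}=0$. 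Together with the integrability assertion \eqref{gl1}, which guarantees $g_{\bspitch,\setu}\in L_{p,\psi}(D^\setu)$, this shows $f\in W_{\bspitch,p,q,\psi,\bsgamma}$, so $W_{\an,p,q,\psi,\bsgamma}\subseteq W_{\bspitch,p,q,\psi,\bsgamma}$. The reverse inclusion follows by the identical argument applied to \eqref{22}, which shares the same ``supersets only'' structure.

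For the \emph{only if} direction, I would prove the stronger ``moreover'' statement directly, which then yields the \emph{only if} claim by contraposition. Assume \eqref{monot} fails, so there exist $\setu\subsetneq\setw$ with $\gamma_\setw>0$ but $\gamma_\setu=0$. I would invoke Example~\ref{ex4} with the single nonzero coefficient $\eta_\setw=1$ (all others zero) and the choice $g\equiv 1\in L_{p,\psi}(D)$, for which $c=\int_D\overline{\psi}(t)\rd t=m_\psi>0$ by \eqref{mean}. Then $f_\bspitch=T_{\bspitch,\setw}(g_\setw)$ lies in $W_{\bspitch,p,q,\psi,\bsgamma}$, its only anchored component being indexed by $\setw\in\setU_\bsgamma$. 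However, by the ANOVA decomposition formula in Example~\ref{ex4}, the ANOVA component of $f_\bspitch$ indexed by $\setu$ equals $c^{|\setw|-|\setu|}g_\setu=m_\psi^{|\setw|-|\setu|}\neq 0$; since $\setu\notin\setU_\bsgamma$, uniqueness of the ANOVA decomposition yields $f_\bspitch\notin W_{\an,p,q,\psi,\bsgamma}$, hence $W_{\bspitch,p,q,\psi,\bsgamma}\not\subseteq W_{\an,p,q,\psi,\bsgamma}$. The symmetric choice $f_\an=T_{\an,\setw}(g_\setw)$, whose anchored component indexed by $\setu$ equals $(-m_\psi)^{|\setw|-|\setu|}\neq 0$ by the anchored decomposition formula in Example~\ref{ex4}, gives $W_{\an,p,q,\psi,\bsgamma}\not\subseteq W_{\bspitch,p,q,\psi,\bsgamma}$.

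The argument is essentially bookkeeping once Lemma~\ref{lem:2} and Example~\ref{ex4} are in hand; the only point requiring care is the \emph{direction} of the monotonicity implication. The content of \eqref{monot} is that $\setU_\bsgamma$ is closed under taking subsets, equivalently that its complement is closed under taking supersets---and it is exactly this ``closed under supersets'' property of the complement that matches the superset-indexed structure of the transformation formulas \eqref{11} and \eqref{22}. The remaining details are to note that $g\equiv 1$ indeed lies in $L_{p,\psi}(D)$ for every $p$ (immediate, since $\psi$ is a probability density) and that it produces a genuinely nonzero off-support component, which is guaranteed by $m_\psi>0$ and $\setu\subsetneq\setw$.
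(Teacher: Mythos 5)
Your proposal is correct and follows essentially the same route as the paper: the \emph{if} direction via the superset-indexed transformation formulas \eqref{11} and \eqref{22} of Lemma~\ref{lem:2} (together with the integrability assertion \eqref{gl1}), and the \emph{moreover} part via the tensor-product functions of Example~\ref{ex4} with $g\equiv 1$, which are exactly the paper's witnesses $\prod_{j\in\setw}x_j$ and $\prod_{j\in\setw}(x_j-m_\psi)$, using $m_\psi>0$ from \eqref{mean}. No gaps.
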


\begin{proof}
As follows from 
Lemma \ref{lem:2}, every term $f_{\bspitch,\setw}$ 
in the anchored decomposition of $f \in W_{\bspitch,p,q,\psi,\bsgamma}$
is a linear combination of functions $f_{A,\setu} \in F_{\an,p,\psi,\setu}$
with $\setu\subseteq\setw$. 
Hence \eqref{monot} implies 
$W_{\bspitch,p,q,\psi,\bsgamma}\subseteq W_{A,p,q,\psi,\bsgamma}$. 
Using the same argument, we derive
$W_{A,p,q,\psi,\bsgamma}\subseteq W_{\bspitch,p,q,\psi,\bsgamma}$
from \eqref{monot}.

Suppose now that \eqref{monot} does not hold for some $\setw$. 
Consider $f(\bsx)=\prod_{j\in\setw}x_j$, 
which corresponds to $f = f_\bspitch$ with $g=1$ as well as
$\eta_\setw = 1$ and $\eta_\setu = 0$ for $\setu \neq \setw$ 
in Example \ref{ex4}. 
Therefore
$f \in W_{\bspitch,p,q,\psi,\bsgamma}$ with 
ANOVA decomposition 
\[
  f\,=\,\sum_{\setu\subseteq\setw} c^{|\setw|-|\setu|} \cdot
T_{A,\setu}(g_\setu).
\]
Since $c = m_\psi \neq 0$, see \eqref{mean}, and 
$T_{\an,\setu}(g_\setu) \neq 0$ for every $\setu$, but 
$\gamma_\setu=0$ for some $\setu \subseteq
\setw$, we obtain 
$f \not\in W_{A,p,q,\psi,\bsgamma}$.

The fact that $W_{A,p,q,\psi,\bsgamma}$ is not a subset of 
$W_{\bspitch,p,q,\psi,\bsgamma}$ can be shown in a similar way by 
considering $f(\bsx)=\prod_{j\in\setw}(x_j-m_\psi)$.
\end{proof}

From now on we assume that \eqref{monot} holds.
Let 
\[
  \imath_{p,q,\psi,\bsgamma}:     
W_{\an,p,q,\psi,\bsgamma}\hookrightarrow W_{\bspitch,p,q,\psi,\bsgamma}
	\qquad \text{and} \qquad 
	\imath_{p,q,\psi,\bsgamma}^{-1}:
W_{\bspitch,p,q,\psi,\bsgamma}\hookrightarrow W_{\an,p,q,\psi,\bsgamma}
\]
denote the embedding operators, which are continuous due to the
closed graph theorem. 
To simplify the notation we will often write 
\[
   \imath_{p,q}\qquad\mbox{and}\qquad \imath^{-1}_{p,q}.
\]
We are interested in the norms of these operators.

Let $\gc \in \{\bspitch,\an\}$.
Recall that, by definition, the operator
\begin{align*}
T_{\gc,\setu}\colon L_{p,\psi}(D^\setu) \to F_{\gc,p,\psi,\setu}
\end{align*}
is an isometric isomorphism.
Define the weighted space
\begin{align*}
\ell_{q,\bsgamma}\left((L_{p,\psi}(D^\setu))_{\setu}\right)
=\bigoplus_{\setu\in\setU_\bsgamma} L_{p,\psi}(D^\setu)
\end{align*}
endowed with the norm
\[
\left\|(g_\setu)_\setu\right\|
=\left|
\left( \|g_\setu\|_{L_{p,\psi}(D^\setu)} / \gamma_\setu \right)_\setu 
\right|_q.
\]
Again, by definition, the mapping
\begin{align*}
T_{\gc}\colon
\ell_{q,\bsgamma}\left((L_{p,\psi}(D^\setu))_\setu\right)
\to W_{\gc,p,q,\psi,\bsgamma},
\end{align*}
given by
\begin{align*}
T_{\gc}\left((g_\setu)_\setu\right)
=\sum_{\setu\in\setU_\bsgamma}T_{\gc,\setu}(g_\setu),
\end{align*}
is an isometric isomorphism.
Define the continuous operator
\begin{align*}
\jmath_{p,q}
=\jmath_{p,q,\psi,\bsgamma}
\colon
\ell_{q,\bsgamma}\left((L_{p,\psi}(D^\setu))_\setu \right)
\to
\ell_{q,\bsgamma}
\left((L_{p,\psi}(D^\setu))_\setu \right)
\end{align*}
by $\jmath_{p,q}=T_\bspitch^{-1}\circ \imath_{p,q}\circ T_\an$. In other 
words, $\jmath_{p,q}$ is defined such that the diagram
$$
\begin{tikzcd}[row sep=2.0em, column sep=2.0em]
  W_{\an,p,q,\psi,\bsgamma}      
\arrow{d}[swap]{\imath_{p,q}}   & 
\ell_{q,\bsgamma}
\left((L_{p,\psi}(D^\setu))_\setu\right)
\arrow{l}[swap]{T_\an} \arrow{d}{\jmath_{p,q}}     & \\
  W_{\bspitch,p,q,\psi,\bsgamma} 
\arrow{r}{T_\bspitch^{-1}}      & 
\ell_{q,\bsgamma}
\left((L_{p,\psi}(D^\setu))_\setu\right)
\end{tikzcd}
$$
is commutative.
Since $T_\bspitch$ and $T_\an$ are isometric isomorphisms, we have
\begin{align}\label{iequalj}
\|\jmath_{p,q}\|
=\|\imath_{p,q}\|
\qquad
\text{and}
\qquad
\|\jmath_{p,q}^{-1}\|
=\|\imath_{p,q}^{-1}\|.
\end{align}
Furthermore, \eqref{11} and \eqref{22} yield the explicit
representation
\begin{align}\label{33}
\jmath_{p,q}\left((g_{\setu})_\setu\right)
=\Bigl(
(-1)^{|\setu|}
\sum_{\setw\subseteq\setu^c}
\int_{D^\setw}(-1)^{|\setu\cup\setw|}g_{\setu\cup\setw}
(\cdot_\setu;\bst_\setw)\,
\overline{\psi}_\setw (\bst_\setw)\rd\bst_\setw
\Bigr)_\setu
\end{align}
and
\begin{align}\label{44}
\jmath_{p,q}^{-1}\left((g_{\setu})_\setu\right)
=\biggl(
\sum_{\setw\subseteq\setu^c}
\int_{D^\setw}g_{\setu\cup\setw}(\cdot_\setu;\bst_\setw)\,
\overline{\psi}_\setw (\bst_\setw)\rd\bst_\setw
\biggr)_\setu.
\end{align}

The following result was first obtained in \cite{KriPilWas16}
  for $D=[0,1]$ and $\psi=1$.

\begin{proposition}\label{prop:equalnorms}
 We have 
 \begin{align} 
   \| \imath_{p,q,\psi,\bsgamma} \|
   = \| \imath_{p,q,\psi,\bsgamma}^{-1} \|
   = \| \jmath_{p,q,\psi,\bsgamma} \|
   = \| \jmath_{p,q,\psi,\bsgamma}^{-1} \|.
 \end{align}
\end{proposition}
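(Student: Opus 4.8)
The plan is to collapse the four-fold equality into the single statement $\|\jmath_{p,q}\| = \|\jmath_{p,q}^{-1}\|$ and then to prove the latter by conjugating $\jmath_{p,q}$ with a diagonal sign-flip isometry. Indeed, \eqref{iequalj} already supplies
\[
\|\jmath_{p,q,\psi,\bsgamma}\| = \|\imath_{p,q,\psi,\bsgamma}\|
\qquad\text{and}\qquad
\|\jmath_{p,q,\psi,\bsgamma}^{-1}\| = \|\imath_{p,q,\psi,\bsgamma}^{-1}\|,
\]
so all four quantities coincide as soon as $\|\jmath_{p,q}\| = \|\jmath_{p,q}^{-1}\|$ is established.

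First I would introduce the operator
\[
S\colon \ell_{q,\bsgamma}\left((L_{p,\psi}(D^\setu))_\setu\right) \to \ell_{q,\bsgamma}\left((L_{p,\psi}(D^\setu))_\setu\right),
\qquad
S\left((g_\setu)_\setu\right) = \left((-1)^{|\setu|}\, g_\setu\right)_\setu.
\]
Multiplying a component $g_\setu$ by $(-1)^{|\setu|}$ leaves $\|g_\setu\|_{L_{p,\psi}(D^\setu)}$ unchanged and hence preserves the weighted $\ell_q$-norm, so $S$ is an isometric isomorphism; moreover $S^2 = \mathrm{id}$, whence $S^{-1} = S$.

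The main step is to verify, directly from the explicit representations \eqref{33} and \eqref{44}, the conjugation identity
\[
S \circ \jmath_{p,q} \circ S = \jmath_{p,q}^{-1}.
\]
Applying $S$, then $\jmath_{p,q}$, then $S$ to a family $(g_\setu)_\setu$, the coefficient of $\int_{D^\setw} g_{\setu\cup\setw}(\cdot_\setu;\bst_\setw)\,\overline{\psi}_\setw(\bst_\setw)\rd\bst_\setw$ in the $\setu$-th component picks up, for each $\setw\subseteq\setu^c$, the sign $(-1)^{|\setu|}\cdot(-1)^{|\setw|}\cdot(-1)^{|\setu\cup\setw|}$; here the first and last factors come from the two applications of $S$ together with the net sign $(-1)^{|\setw|}$ of $\jmath_{p,q}$, cf.\ \eqref{11}. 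Since $\setu$ and $\setw$ are disjoint we have $|\setu\cup\setw| = |\setu|+|\setw|$, so this product equals $(-1)^{2|\setu|+2|\setw|} = 1$, turning the signed sum \eqref{33} into the unsigned sum \eqref{44}. The only point requiring care is this sign bookkeeping; the rest is a routine rearrangement, and I do not expect a genuine obstacle.

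Finally, since $S$ is an isometric isomorphism with $S^{-1}=S$, conjugation by $S$ preserves the operator norm, so
\[
\|\jmath_{p,q}^{-1}\| = \|S \circ \jmath_{p,q} \circ S\| = \|\jmath_{p,q}\|.
\]
Combined with \eqref{iequalj}, this yields the claimed chain of equalities. The content of the argument is the single observation that $\jmath_{p,q}$ and $\jmath_{p,q}^{-1}$ differ only through the signs $(-1)^{|\setw|}$, and that these are absorbed by the diagonal isometry $S$.
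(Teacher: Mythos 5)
Your proposal is correct and follows essentially the same route as the paper: the paper likewise reduces to $\jmath_{p,q}$ via \eqref{iequalj}, introduces the same sign-flip isometry $S$, and observes that $S\circ\jmath_{p,q}\circ S=\jmath_{p,q}^{-1}$ (stated there as a commutative diagram), your sign bookkeeping being a correct expansion of what the paper leaves implicit.
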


\begin{proof}
Define the operator
\begin{align*}
S\colon
\ell_{q,\bsgamma}\left((L_{p,\psi}(D^\setu))_\setu\right)
\to \ell_{q,\bsgamma}\left((L_{p,\psi}(D^\setu))_\setu\right)
\end{align*}
by
\begin{align*}
S\left((g_\setu)_\setu\right)
=\left((-1)^{|\setu|}g_\setu\right)_\setu.
\end{align*}
By definition of $S$ and by \eqref{33} as well as \eqref{44} the diagram
\[
\begin{tikzcd}[row sep=2.0em, column sep=2.0em]
  \ell_{q,\bsgamma}\left((L_{p,\psi}(D^\setu))_\setu\right) 
\arrow{d}[swap]{\jmath_{p,q}^{-1}} & 
  \ell_{q,\bsgamma}\left((L_{p,\psi}(D^\setu))_\setu\right) 
\arrow{l}[swap]{S} \arrow{d}{\jmath_{p,q}} & \\
  \ell_{q,\bsgamma}\left((L_{p,\psi}(D^\setu))_\setu\right) 
\arrow{r}{S}        
  &\ell_{q,\bsgamma}\left((L_{p,\psi}(D^\setu))_\setu\right) 
\end{tikzcd}
\]
is commutative.
Again $S$ is an isometric isomorphism. Hence we get the claim.
\end{proof}

Next we compute the norms of the embeddings in the extremal cases 
$p,q\in\{1,\infty\}$. Afterwards we use interpolation to find upper 
bounds for the general case.
In what follows we use the convention that $ \frac00 = 0$.

\subsection{The Case $p,q\in\{1,\infty\}$}
Recall that for $p=1$ we have
\[
\kappa_\psi \,= \,
\|\overline{\psi}/\psi \|_{L_\infty(D)}
\in (0, \infty),
\] 
see \eqref{eq2}. Furthermore, recall the definition of $m_\psi$
in \eqref{mean}.
In the particular case from Remark
\ref{r5}, where we have $m_\psi=1/2$ and $\kappa_\psi=1$,
the following result was obtained in
\cite[Thm.~14]{HeRiWa14}.

\begin{theorem}\label{thm:main}
For $p,q\in\{1,\infty\}$ we have
\[
  \|\imath_{p,q,\psi,\bsgamma}\| = 
  \|\imath_{p,q,\psi,\bsgamma}^{-1}\| =  C_{p,q,\psi,\bsgamma},
\]
where
\[
   C_{p,q,\psi,\bsgamma}\,=\,\left\{\begin{array}{ll}
 {\displaystyle \max_{\setu\subseteq[1:d]}\sum_{\setv\subseteq\setu^c}
    m_\psi^{|\setv|}\,\frac{\gamma_{\setu\cup\setv}}{\gamma_\setu}} &
   \mbox{for $p=\infty$ and $q=\infty$},\\
  \ \\
  {\displaystyle \max_{\setv\subseteq[1:d]}\sum_{\setu\subseteq\setv}
    m_\psi^{|\setv|-|\setu|}\,\frac{\gamma_{\setv}}{\gamma_\setu}} &
     \mbox{for\ $p=\infty$ and $q=1$},\\
  \ \\
{\displaystyle \max_{\setv\subseteq[1:d]} \sum_{\setu\subseteq\setv}
  \kappa_\psi^{|\setv|-|\setu|}\, \frac{\gamma_\setv}{\gamma_\setu}}
&\mbox{for $p=1$ and $q=1$},\\
   \ \\
  {\displaystyle \max_{\setu\subseteq[1:d]} 
   \sum_{\setv\subseteq\setu^c}\kappa_\psi^{|\setv|}\,
    \frac{\gamma_{\setu\cup\setv}}{\gamma_\setu}} &\mbox{for $p=1$ and 
     $q=\infty$}.
  \end{array}\right.
\]
\end{theorem}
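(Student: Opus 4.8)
The plan is to reduce everything to the single operator $\jmath_{p,q}^{-1}$. By Proposition~\ref{prop:equalnorms} all four operator norms coincide, so it suffices to compute $\|\jmath_{p,q}^{-1}\|$, and I would work with the sign-free formula \eqref{44}. Reading off \eqref{44}, the operator $\jmath_{p,q}^{-1}$ is the block-triangular operator $A=(A_{\setu,\setv})_{\setu,\setv\in\setU_\bsgamma}$ on $\ell_{q,\bsgamma}((L_{p,\psi}(D^\setu))_\setu)$ whose block $A_{\setu,\setv}\colon L_{p,\psi}(D^\setv)\to L_{p,\psi}(D^\setu)$ is nonzero exactly when $\setu\subseteq\setv$, in which case, writing $\setw=\setv\setminus\setu$,
\[
A_{\setu,\setv}(g)\,=\,\int_{D^\setw} g(\cdot_\setu;\bst_\setw)\,\overline{\psi}_\setw(\bst_\setw)\rd\bst_\setw .
\]

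For the upper bound I would first estimate each block. A direct computation gives $\|A_{\setu,\setv}\|\le m_\psi^{|\setw|}$ for $p=\infty$ (using $\int_D\overline{\psi}=m_\psi$, see \eqref{mean}) and $\|A_{\setu,\setv}\|\le\kappa_\psi^{|\setw|}$ for $p=1$ (using $\overline{\psi}\le\kappa_\psi\,\psi$ almost everywhere). Writing $a_\setv=\|g_\setv\|_{L_{p,\psi}(D^\setv)}/\gamma_\setv$ and applying the triangle inequality together with these block bounds yields $\|h_\setu\|/\gamma_\setu\le\sum_{\setv\supseteq\setu}M_{\setu,\setv}\,|a_\setv|$, where $h_\setu$ is the $\setu$-component of $A((g_\setv)_\setv)$ and $M_{\setu,\setv}=b_{\setu,\setv}\,\gamma_\setv/\gamma_\setu$ with $b_{\setu,\setv}=m_\psi^{|\setw|}$ (resp.\ $\kappa_\psi^{|\setw|}$). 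Hence $\|A\|$ is dominated by the $\ell_q\to\ell_q$ norm of the nonnegative triangular matrix $M$. For $q=\infty$ this norm is the maximal row sum and for $q=1$ the maximal column sum of $M$; substituting $b_{\setu,\setv}=m_\psi^{|\setv\setminus\setu|}$ (resp.\ $\kappa_\psi^{|\setv\setminus\setu|}$) reproduces exactly the four quantities $C_{p,q,\psi,\bsgamma}$.

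For the matching lower bound I would exhibit (near-)extremal inputs tailored to the structure of the $\ell_q$-matrix norm. When $q=\infty$ I fix the row index $\setu^*$ attaining the maximum and feed nonzero data only into the coordinates $\setv=\setu^*\cup\setw$; when $q=1$ I fix the column index $\setv^*$ attaining the maximum and put all the data into the single coordinate $\setv^*$. For $p=\infty$ the blocks are exactly saturated by constants, so taking each $g_\setv$ a suitable constant turns every block bound into an equality, and since the resulting summands in $h_\setu$ are nonnegative constants there is no cancellation in the triangle inequality; the ratio then equals $C_{\infty,q,\psi,\bsgamma}$ on the nose. For $p=1$ I choose a set $A\subseteq D$ of positive $\psi$-measure on which $\overline{\psi}/\psi>\kappa_\psi-\varepsilon$ and take each datum of the product form $\phi(\cdot_\setu)\prod_{j\in\setw}\mathbf 1_A$; then each block ratio equals $(\int_A\overline{\psi}/\int_A\psi)^{|\setw|}\ge(\kappa_\psi-\varepsilon)^{|\setw|}$, and letting $\varepsilon\to0$ drives the overall ratio to $C_{1,q,\psi,\bsgamma}$. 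In every case the summands are nonnegative multiples of a common nonnegative profile, so the triangle inequality is tight and the lower bound matches the upper bound.

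The main obstacle is precisely the $p=1$ lower bound: since $\kappa_\psi=\esup_{t\in D}\overline{\psi}(t)/\psi(t)$ is only an essential supremum, the block norm $\kappa_\psi^{|\setw|}$ need not be attained, so the argument must proceed through the limiting construction above, with care that $\int_A\psi>0$ (guaranteed by the definition of the essential supremum and $\kappa_\psi\in(0,\infty)$ from \eqref{eq2}) and that all active blocks are driven toward extremality simultaneously by the same set $A$. The remaining point to watch is bookkeeping: one must verify that the two inequalities used in the upper bound — the per-block norm estimate and the matrix $\ell_q$-norm estimate — can be turned into equalities by a single input family, which is exactly what the nonnegativity of the chosen data secures.
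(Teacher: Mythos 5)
Your proposal is correct and follows essentially the same route as the paper: it reduces to the transformation formula \eqref{44} via Proposition~\ref{prop:equalnorms}, proves the upper bounds with exactly the per-block estimates $m_\psi^{|\setw|}$ (from $\int_D\overline{\psi}=m_\psi$) and $\kappa_\psi^{|\setw|}$ (from $\overline{\psi}\le\kappa_\psi\psi$) followed by the triangle inequality, and obtains the lower bounds from the same tensor-product test functions (constants for $p=\infty$, normalized indicators of sets where $\overline{\psi}/\psi>\kappa_\psi-\varepsilon$ for $p=1$) that the paper packages as Example~\ref{ex4}. Your phrasing of the upper bound as the $\ell_q\to\ell_q$ norm (max row/column sum) of a nonnegative triangular majorant matrix is just a unified restatement of the paper's four case-by-case computations, not a different argument.
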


\begin{proof}
According to Proposition \ref{prop:equalnorms} is suffices to
consider either $\imath_{p,q,\psi,\bsgamma}$ or 
$\imath_{p,q,\psi,\bsgamma}^{-1}$.
In the sequel,
\[
f = \sum_{\setu\in\setU_\bsgamma}T_{\an,\setu}(g_{\an,\setu})
= \sum_{\setu\in\setU_\bsgamma}T_{\bspitch,\setu}(g_{\bspitch,\setu})
\]
with $g_{\an,\setu}, g_{\bspitch,\setu} \in L_{p,\psi}(D^\setu)$.

\subsubsection*{\bf Case $p=q=\infty$}
Applying \eqref{22}, we get 
\begin{eqnarray*}
  \|f\|_{W_{A,\infty,\infty,\psi,\bsgamma}} &=&
\max_{\setu\in\setU_\bsgamma} 
\frac{\|g_{A,\setu}\|_{L_\infty(D^\setu)}}{\gamma_\setu}\\
&=& \max_{\setu\in\setU_\bsgamma}
   \esup_{\bsx_\setu\in D^\setu}\left|\frac1{\gamma_\setu}\,
    \sum_{\setv\subseteq\setu^c}\int_{D^\setv}g_{\bspitch,\setu\cup\setv}
    (\bsx_\setu;\bst_\setv)\,\overline{\psi}_\setv(\bst_\setv)
    \rd\bst_\setv\right|\\
   &\le& \max_{\setu \in \setU_\bsgamma} 
\max_{\setv\subseteq\setu^c}
   \frac{\|g_{\bspitch,\setu\cup\setv}\|_{L_{\infty(D^{\setu\cup\setv})}}}
  {\gamma_{\setu\cup\setv}}\cdot \left(\sum_{\setv\subseteq\setu^c}
    \frac{\bsgamma_{\setu\cup\setv}}{\gamma_\setu}\,
  m_\psi^{|\setv|} \right)\\
  &\le& \max_{\setw\in\setU_\bsgamma}
  \frac{\|g_{\bspitch,\setw}\|_{L_{\infty(D^\setw)}}}{\gamma_\setw}
  \cdot\max_{\setu\in\setU_\bsgamma}
 \sum_{\setv\subseteq\setu^c}m_\psi^{|\setv|}\,
    \frac{\gamma_{\setu\cup\setv}}{\gamma_\setu}\\
  &=& \|f\|_{W_{\bspitch,\infty,\infty,\psi,\bsgamma}}
\,C_{\infty,\infty,\psi,\bsgamma}.
\end{eqnarray*}
This proves that $\|\imath^{-1}_{\infty,\infty}\|\le
 C_{\infty,\infty,\psi,\bsgamma}$.

We next prove that there is equality. 
In fact, for $f_\bspitch$ according to Example \ref{ex4} 
with $g=1$ and $\eta_\setu = \gamma_\setu$ we obtain
$\|f_\bspitch\|_{W_{\bspitch,\infty,\infty,\psi,\bsgamma}}=1$
and $\|f_\bspitch\|_{W_{A,\infty,\infty,\psi,\bsgamma}}=
C_{\infty,\infty,\psi,\bsgamma}$.

\subsubsection*{\bf Case $p=\infty$ and $q=1$}
Applying \eqref{22} again, we have 
\begin{eqnarray*}
\|f\|_{W_{A,\infty,1,\psi,\bsgamma}}
   &=&\sum_{\setu\in\setU_\bsgamma} \frac1{\gamma_\setu}\,
    \|g_{A,\setu}\|_{L_{\infty}(D^{\setu})}\\
  &=& \sum_{\setu\in\setU_\bsgamma}\frac1{\gamma_\setu} \esup_{\bsx_\setu\in D^\setu}
    \left|\sum_{\setw\subseteq\setu^c}\int_{D^\setw}
      g_{\bspitch,\setu\cup\setw}(\bsx_\setu;\bst_\setw)\,
   \overline{\psi}_\setw(\bst_\setw)\rd\bst_\setw\right|\\
  &\le& \sum_{\setu\in\setU_\bsgamma}\frac1{\gamma_\setu}\,
  \sum_{\setw\subseteq\setu^c}\|g_{\bspitch,\setu\cup\setw}
  \|_{L_{\infty}(D^{\setu\cup\setw})}\,m_\psi^{|\setw|}
    \\
  &=&\sum_{\setu\in\setU_\bsgamma}\sum_{\setw\subseteq\setu^c}
   \frac{\|g_{\bspitch,\setu\cup\setw}\|_{L_\infty(D^{\setu\cup\setw})}}
      {\gamma_{\setu\cup\setw}}\,\frac{\gamma_{\setu\cup\setw}}{\gamma_\setu}
    \,m_\psi^{|\setw|}\\
  &\le& \|f\|_{W_{\bspitch,\infty,1,\psi,\bsgamma}}\,
C_{\infty,1,\psi,\bsgamma},
\end{eqnarray*}
which proves 
$\|\imath^{-1}_{\infty,1}\|\le C_{\infty,1,\psi,\bsgamma}$.

Let $\setw$ be such that 
\[
    \sum_{\setu\subseteq\setw}m_\psi^{|\setw|-|\setu|}\,
   \frac{\gamma_{\setw}}{\gamma_\setu}\,=\,C_{\infty,1,\psi,\bsgamma},
\]
and consider 
$f_\an$ according to Example \ref{ex4} 
with $g=1$ as well as  
$\eta_\setw = \gamma_\setw$ and $\eta_\setu = 0$ for
$\setu \neq \setw$.
Clearly, the ANOVA norm of $f_\an$ is equal to one, and
$\|f_\an\|_{W_{\bspitch,\infty,1,\psi,\bsgamma}} =
C_{\infty,1,\psi,\bsgamma}$.
Therefore $\|\imath_{\infty,1}\|\ge C_{\infty,1,\psi,\bsgamma}$.

\subsubsection*{\bf Case $p=q=1$}
Applying \eqref{22}, we have 
\begin{eqnarray*}
\|f\|_{W_{A,1,1,\psi,\bsgamma}}&=&
\sum_{\setu\in\setU_\bsgamma}\frac1{\gamma_\setu}
   \,\|g_{A,\setu}\|_{L_{1,\psi(D^\setu)}}\\
&=& \sum_{\setu\in\setU_\bsgamma}\frac1{\gamma_\setu}
   \int_{D^\setu}\left|\sum_{\setw\subseteq\setu^c}\int_{D^\setw}
  g_{\bspitch,\setu\cup\setw}(\bsx_\setu;\bst_\setw)\,
    \overline{\psi}_\setw(\bst_\setw)\rd\bst_\setw\right|
\psi_\setu(\bsx_\setu)
   \rd\bsx_\setu\\
 &\le&\sum_{\setu\in\setU_\bsgamma}\sum_{\setw\subseteq\setu^c}\int_{D^\setu}
   \int_{D^\setw}
  \frac{|g_{\bspitch,\setu\cup\setw}(\bsx_\setu;\bst_\setw)|}
   {\bsgamma_{\setu}}\, \psi_{\setu \cup \setw}
(\bsx_\setu;\bst_\setw) \, \frac{\overline{\psi}_{\setw}(\bst_\setw)}
{\psi_\setw(\bst_\setw)} \rd\bst_\setw \rd\bsx_\setu.
\end{eqnarray*}
Estimating 
$\overline{\psi}/\psi$ by $\kappa_\psi$ and replacing 
$\setu\cup\setw$ by $\setv$, we get that $\setw=\setv\setminus\setu$ 
and 
\begin{eqnarray*}
\|f\|_{W_{A,1,1,\psi,\bsgamma}}&\le&\sum_{\setv\in\setU_\bsgamma}
   \frac1{\gamma_\setv}\,\|g_{\bspitch,\setv}\|_{L_{1,\psi}(D^{\setv})}
   \cdot\left(\sum_{\setu\subseteq\setv}\kappa_\psi^{|\setv|-|\setu|}\,
    \frac{\gamma_\setv}{\gamma_\setu}\right)\\
   &\le& \|f\|_{W_{\bspitch,1,1,\psi,\bsgamma}}\,
C_{1,1,\psi,\bsgamma}.
\end{eqnarray*}
This proves that $\|\imath_{1,1}^{-1}\|\le C_{1,1,\psi,\bsgamma}$.

We now show that $\|\imath^{-1}_{1,1}\|= C_{1,1,\psi,\bsgamma}$. 
Consider a sequence of non-negative
functions $G_n \in L_1(D)$ such that 
\[
  \int_D G_n(x)\rd x\,=\,1\quad\mbox{and}\quad
  \lim_{n\to\infty}\int_D G_n(x)\,\frac{\overline{\psi}(x)}{\psi(x)}
    \rd x\,=\,\kappa_\psi.  
\]
For instance 
\begin{equation}\label{Gn}
   G_n(x)\,=\,\frac1{\lambda(K_n)}\,{\bf 1}_{K_n}(x),\quad
   \mbox{where}\quad 
   K_n\,\subseteq\,\left\{x\in D\ :\ 
\frac{\overline{\psi}(x)}{\psi(x)}\ge 
   \kappa_\psi-\frac1n\right\},
\end{equation}
has  positive and finite Lebesgue measure $\lambda(K_n)$.

Define
\begin{equation}\label{gn}
  g_n(t)\,=\,\frac{G_n(t)}{\psi(t)}.
\end{equation}
Of course, $\|g_n\|_{L_{1,\psi}(D)} = 1$.
Moreover, $m_n$ defined by
\[
  m_n
\, =\,  \int_Dg_n(t)\,\overline{\psi}(t) \rd t
\]
satisfies $\lim_{n\to\infty} m_n\,=\,\kappa_\psi$.

Let $\setw\subseteq[1:d]$ be such that 
\[
  C_{1,1,\psi,\bsgamma}\,=\,
\sum_{\setu \subseteq\setw}\kappa_\psi^{|\setw|-|\setu|}\,
   \frac{\gamma_\setw}{\gamma_\setu}.
\]
Let $f_\bspitch$ be given according to Example \ref{ex4} with
$g=g_n$, $\eta_\setw = \gamma_\setw$, and $\eta_\setu = 0$ for
$\setu \neq \setw$.
It follows that $f_\bspitch \in W_{\bspitch,1,1,\psi,\bsgamma}$ with 
$\|f_\bspitch\|_{W_{\bspitch,1,1,\psi,\bsgamma}} =1$ and
$c = m_n$.
Moreover, the ANOVA decomposition of $f_\bspitch$ is given by
\[
f_\bspitch \,=\,
\gamma_\setw \,
\sum_{\setu \subseteq \setw} m_n^{|\setw|-|\setu|} \,
T_{\an,\setu} (g_{\setu}).
\]
Consequently,
\[
  \|f_\bspitch\|_{W_{A,1,1,\psi,\bsgamma}}\,=\,
\sum_{\setu\subseteq\setw}\,
  m_n^{|\setw|-|\setu|}\,\frac{\gamma_\setw}{\gamma_\setu},
\]
which converges to $C_{1,1,\psi,\bsgamma}$ as $n \to \infty$.
This completes the proof that $\|\imath^{-1}_{1,1}\|
= C_{1,1,\psi,\bsgamma}$.

\subsubsection*{\bf Case $p=1$ and $q=\infty$}
Here we have
\begin{eqnarray*}
 &&\|f\|_{W_{A,1,\infty,\psi,\bsgamma}}\\
 &&=\, \max_{\setu\in\setU_\bsgamma}\frac1{\gamma_\setu}
   \int_{D^\setu}\left|\sum_{\setv\subseteq\setu^c}\int_{D^\setv}
   g_{\pitchfork,\setu\cup\setv}(\bsx_\setu;\bst_\setv)
\, \psi_\setv(\bst_\setv)
     \cdot
    \frac{\overline{\psi}_\setv(\bst_\setv)}
{\psi_\setv(\bst_\setv)}\rd\bst_\setv
    \right| \psi_\setu (\bsx_\setu) \rd\bsx_\setu\\
  &&\le\,  \max_{\setu\subseteq[1:d]}\sum_{\setv\subseteq\setu^c}
    \frac{\|g_{\bspitch,\setu\cup\setv}\|_{L_{1,\psi}
(D^{\setu\cup\setv})}}
     {\gamma_{\setu\cup\setv}}\,\frac{\gamma_{\setu\cup\setv}}{\gamma_\setu}\,
      \kappa_\psi^{|\setv|}\\
  &&\le\, C_{1,\infty,\psi,\bsgamma}\,\|f\|_{W_{\bspitch,1,\infty,\psi,\bsgamma}}
\end{eqnarray*}
and $\|\imath^{-1}_{1,\infty}\|\le C_{1,\infty,\psi,\bsgamma}$,
as needed. 

To prove that 
$\|\imath_{1,\infty}\|$ is 
equal to 
$C_{1,\infty,\psi,\bsgamma}$ it is enough to consider 
$f_A$ as in Example \ref{ex4} with $g=g_n$ as in (\ref{gn})
and $\eta_\setu = (-1)^{|\setu|} \gamma_\setu$. Then $\|f_A\|_{W_{A,1,\infty,\psi,\bsgamma}} = 1$.
By using the anchored decomposition of $f_A$, we see that 
\begin{equation}
\|f_A\|_{W_{\bspitch,1,\infty,\psi,\bsgamma}} = \max_{\setu \subseteq [1:d]} \sum_{\setv \subseteq \setu^c}
m_n^{|\setv|} \frac{\gamma_{\setu \cup \setv}}{\gamma_{\setu}},
\end{equation}
which converges to $C_{1,\infty, \psi, \bsgamma}$ as $n\to\infty$.
\end{proof}

\subsection{General Case $p,q\in[1,\infty]$}
In this section we follow the approach from \cite{HS16}, as 
we use complex interpolation to obtain upper bounds 
for
$\| \imath_{p,q,\psi,\bsgamma} \|= \| \imath_{p,q,\psi,\bsgamma}^{-1} \|$
for $p,q\in[1,\infty]$. We assume here that \eqref{eq2} holds for $p=1$.
Then it also holds for $p>1$, so all function spaces considered 
below are well defined.

We work with complex valued functions since the direct application of
the complex interpolation method needs complex scalars. By considering
real and imaginary parts separately, the definition of the spaces
$W_{\gc,p,q,\psi,\bsgamma}$ can be extended to complex valued
functions on $D^d$.
Derivatives and integrals are applied to
both parts. The results of the previous subsection remain valid.
Indeed, the lower bounds for the norms obviously also hold for
complex valued functions. Moreover, the proofs of the upper bounds
remain valid also in the complex case. This is due to the fact that 
the inequalities used are triangle inequalities,
which are
also valid for complex scalars.

By Proposition \ref{prop:equalnorms} it is enough to consider
$\| \imath_{p,q,\psi,\bsgamma}\|$. The next theorem
provides the general interpolation result for these norms,
cf.\ \cite[Sec.~4]{HS16}.

\begin{theorem}\label{thm:generalinterpol}
Let $p,q,p_0,q_0,p_1,q_1 \in [1,\infty]$ and 
$\theta\in[0,1]$ be such that 
\[
\frac{1}{p}=\frac{1-\theta}{p_0}+\frac{\theta}{p_1} \quad  \text{and}
\quad \frac{1}{q}=\frac{1-\theta}{q_0}+\frac{\theta}{q_1}.
\]
Then 
\[
  \| \imath_{p,q,\psi,\bsgamma}\| \le 
  \| \imath_{p_0,q_0,\psi,\bsgamma}\|^{1-\theta} \,
  \| \imath_{p_1,q_1,\psi,\bsgamma}\|^\theta.
\]
\end{theorem}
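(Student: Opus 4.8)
The plan is to exploit the fact that, by the explicit representation \eqref{33}, the operator $\jmath_{p,q}$ is given by a single formula that does not involve $p$ or $q$ at all: it is the restriction to $\ell_{q,\bsgamma}\left((L_{p,\psi}(D^\setu))_\setu\right)$ of one fixed algebraic map $J$ acting on tuples $(g_\setu)_{\setu\in\setU_\bsgamma}$. Since, by \eqref{iequalj} together with Proposition \ref{prop:equalnorms}, we have $\|\imath_{p,q,\psi,\bsgamma}\| = \|\jmath_{p,q}\|$, it suffices to prove the asserted bound for $\|\jmath_{p,q}\|$, and this will be a clean application of the complex interpolation method to the single operator $J$, following the approach of \cite{HS16}.

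First I would fix the interpolation couple. Set $X_i = \ell_{q_i,\bsgamma}\left((L_{p_i,\psi}(D^\setu))_\setu\right)$ for $i=0,1$. By Remark \ref{r4} each $L_{p,\psi}(D^\setu)$ embeds continuously into $L_1^{\mathrm{loc}}(D^\setu)$, so both $X_0$ and $X_1$ embed continuously into the common Hausdorff space $\bigoplus_{\setu\in\setU_\bsgamma} L_1^{\mathrm{loc}}(D^\setu)$; hence $(X_0,X_1)$ is a compatible couple, and $J$ maps it into itself with endpoint norms $\|J\|_{X_i\to X_i} = \|\jmath_{p_i,q_i}\|$.

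The key step is the identification of the complex interpolation space
\[
  [X_0,X_1]_\theta = \ell_{q,\bsgamma}\left((L_{p,\psi}(D^\setu))_\setu\right)
  \quad\text{isometrically},
\]
where $1/p = (1-\theta)/p_0 + \theta/p_1$ and $1/q = (1-\theta)/q_0 + \theta/q_1$. For each fixed $\setu$ the measure $\psi_\setu(\bst_\setu)\rd\bst_\setu$ is independent of $p$, so Calderón's theorem on complex interpolation of $L_p$ spaces over a fixed measure gives $[L_{p_0,\psi}(D^\setu),L_{p_1,\psi}(D^\setu)]_\theta = L_{p,\psi}(D^\setu)$ isometrically. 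Since $\setU_\bsgamma$ is finite, the outer $\ell_q$ structure is a finite direct sum, and the interpolation of such $\ell_q$-sums again yields $\ell_q$ of the interpolated fibres with the expected index; the positive weights $\gamma_\setu$ are fixed scalars and can be removed beforehand by the $p,q$-independent isometric rescaling $(g_\setu)_\setu\mapsto (g_\setu/\gamma_\setu)_\setu$, so they do not affect the identification. Combining these two facts gives the displayed isometry.

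With the couple and the interpolation space in place, the conclusion is immediate: the fundamental interpolation property of the complex method applied to $J$ yields
\[
  \|J\|_{[X_0,X_1]_\theta\to[X_0,X_1]_\theta}
  \le \|J\|_{X_0\to X_0}^{1-\theta}\,\|J\|_{X_1\to X_1}^{\theta},
\]
that is, $\|\jmath_{p,q}\| \le \|\jmath_{p_0,q_0}\|^{1-\theta}\|\jmath_{p_1,q_1}\|^{\theta}$, and translating back through $\|\imath_{p,q,\psi,\bsgamma}\| = \|\jmath_{p,q}\|$ finishes the proof. I expect the main obstacle to be the isometric identification of $[X_0,X_1]_\theta$: one must check both that the two endpoint norms genuinely arise as restrictions of the \emph{same} fixed operator $J$, so that interpolation applies to a single map rather than to an abstract family, and that the combination of Calderón's $L_p$-interpolation with the finite weighted $\ell_q$-sum interpolation is indeed isometric, since only then does the product bound emerge with no extraneous constant.
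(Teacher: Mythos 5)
Your proposal is correct and follows essentially the same route as the paper: reduce to $\jmath_{p,q}$ via Proposition \ref{prop:equalnorms}, identify $\bigl[\ell_{q_0,\bsgamma}((L_{p_0,\psi}(D^\setu))_\setu),\ell_{q_1,\bsgamma}((L_{p_1,\psi}(D^\setu))_\setu)\bigr]_\theta$ isometrically with $\ell_{q,\bsgamma}((L_{p,\psi}(D^\setu))_\setu)$, and apply the interpolation property to the single $p,q$-independent operator given by \eqref{33}. The only difference is bibliographic: the paper invokes Triebel's results on interpolation of $\ell_q$-sums and of vector-valued $L_p$ spaces, while you assemble the same isometric identification from Calder\'on's scalar $L_p$ theorem and finite weighted $\ell_q$-sums; your explicit attention to the compatibility of the couple and to the fact that both endpoint operators restrict the same map is a welcome, if implicit in the paper, precaution.
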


\begin{proof}
By Proposition \ref{prop:equalnorms}, it is enough to prove the
corresponding result for $\jmath_{p,q,\psi,\bsgamma}$ instead of
$\imath_{p,q,\psi,\bsgamma}$. The relevant results for spaces of type
$\ell_{q,\bsgamma} \left((A_\setu)_\setu\right)$ 
with $A_\setu=L_{p,\psi}(D^\setu)$
are Theorem 1.18.1 (formula (4)) in \cite{Tr78}, i.e.,
\begin{equation}\label{intpol1}
\left[\ell_{q_0}(A_j),\ell_{q_1}(B_j)\right]_\theta\,=\,
\ell_q\left([A_j,B_j]_\theta\right),
\end{equation}
for Banach spaces $A_j,B_j$ and $1/q=(1-\theta)/q_0+\theta/q_1$,
complemented by the following Remark 2, and Theorem 1.18.6.2
(formula (15)) in \cite{Tr78}, i.e.,
\begin{equation}\label{intpol2}
[L_{p_0}(A),L_{p_1}(A)]_\theta=L_p(A),
\end{equation}
for  a Banach space $A$ and $1/p=(1-\theta)/p_0+\theta/p_1$ with
$0<\theta<1$. All these interpolation identities are to be
understood with equality of the norms. Together they prove the
claim in the theorem.
\end{proof}

\begin{remark}
For 
future
applications it might be useful to use different
weight sequences for different $p,q$. As explained in \cite{HS16},
this is possible without much difficulties by also interpolating
the weights. Additionally, it is also
possible to interpolate the weight functions $\psi$.
\end{remark}

In the particular case where $D=[0,1]$, $\psi=1$ and $p=q$ 
the upper bound for 
$\|\imath_{p,p,\psi,\bsgamma}\|$ 
from the next result was obtained in \cite[Thm.~2]{HS16}.

\begin{theorem}\label{thm:main2}
For $1 \leq p \leq q \leq \infty$ we have
\[
  \|\imath_{p,q,\psi,\bsgamma}\| \,= \,
  \|\imath_{p,q,\psi,\bsgamma}^{-1}\| \, \leq\,
   C_{1,\infty,\psi,\bsgamma}^{1/p-1/q} \cdot C_{1,1,\psi,\bsgamma}^{1/q}
   \cdot C_{\infty,\infty,\psi,\bsgamma}^{1-1/p}. 
\]
For $1 \leq q \leq p \leq \infty$ we have
\[
  \|\imath_{p,q,\psi,\bsgamma}\| \,= \,
  \|\imath_{p,q,\psi,\bsgamma}^{-1}\| \, \leq\,
   C_{\infty,1,\psi,\bsgamma}^{1/q-1/p} \cdot C_{1,1,\psi,\bsgamma}^{1/p}
   \cdot C_{\infty,\infty,\psi,\bsgamma}^{1-1/q}.
\]
\end{theorem}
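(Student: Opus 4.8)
The plan is to combine the exact corner values from Theorem~\ref{thm:main} with the interpolation inequality of Theorem~\ref{thm:generalinterpol}. The equality $\|\imath_{p,q,\psi,\bsgamma}\|=\|\imath_{p,q,\psi,\bsgamma}^{-1}\|$ is immediate from Proposition~\ref{prop:equalnorms}, so it only remains to bound $\|\imath_{p,q,\psi,\bsgamma}\|$ from above. I would picture the operator norm as a function of the point $(1/p,1/q)\in[0,1]^2$; the four corners of the square are the cases $p,q\in\{1,\infty\}$, where Theorem~\ref{thm:main} supplies the exact values $C_{1,1,\psi,\bsgamma}$, $C_{1,\infty,\psi,\bsgamma}$, $C_{\infty,1,\psi,\bsgamma}$, $C_{\infty,\infty,\psi,\bsgamma}$, and Theorem~\ref{thm:generalinterpol} says precisely that $\log\|\imath_{p,q,\psi,\bsgamma}\|$ obeys a two-point interpolation inequality along the segment joining any two admissible parameter pairs. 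The regime $1\le p\le q\le\infty$ corresponds to $1/q\le 1/p$, i.e.\ the closed triangle with vertices $(1,0)$, $(1,1)$, $(0,0)$ in $(1/p,1/q)$-coordinates, whose corners are the cases $(1,\infty)$, $(1,1)$, $(\infty,\infty)$; the regime $1\le q\le p\le\infty$ corresponds to the complementary triangle with corners $(\infty,1)$, $(1,1)$, $(\infty,\infty)$. Since Theorem~\ref{thm:generalinterpol} interpolates only two endpoints at a time, I would reach an interior point of a triangle by iterating the inequality twice.

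Concretely, for $1\le p\le q\le\infty$ I would first interpolate between $(p_0,q_0)=(1,\infty)$ and $(p_1,q_1)=(1,1)$ with parameter $\theta=p/q\in[0,1]$; the resulting pair is $\tilde p=1$, $1/\tilde q=p/q$, and Theorem~\ref{thm:generalinterpol} gives
\[
\|\imath_{1,\tilde q,\psi,\bsgamma}\|\le
C_{1,\infty,\psi,\bsgamma}^{\,1-p/q}\,C_{1,1,\psi,\bsgamma}^{\,p/q}.
\]
I would then interpolate between $(1,\tilde q)$ and $(\infty,\infty)$ with parameter $\theta=1-1/p\in[0,1]$, which lands exactly at $(1/p,1/q)$ since $(1-\theta)/\tilde p=1/p$ and $(1-\theta)/\tilde q=(1/p)(p/q)=1/q$. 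Substituting the bound above (legitimate because the relevant exponent $1-\theta=1/p$ is non-negative) yields
\[
\|\imath_{p,q,\psi,\bsgamma}\|
\le\bigl(C_{1,\infty,\psi,\bsgamma}^{\,1-p/q}\,C_{1,1,\psi,\bsgamma}^{\,p/q}\bigr)^{1/p}
\,C_{\infty,\infty,\psi,\bsgamma}^{\,1-1/p},
\]
and collecting exponents produces $C_{1,\infty,\psi,\bsgamma}^{\,1/p-1/q}\,C_{1,1,\psi,\bsgamma}^{\,1/q}\,C_{\infty,\infty,\psi,\bsgamma}^{\,1-1/p}$, as claimed. The case $1\le q\le p\le\infty$ is entirely symmetric: I would first interpolate $(\infty,1)$ with $(1,1)$ using $\theta=q/p$ to reach $1/\tilde p=q/p$, $\tilde q=1$, then interpolate that pair with $(\infty,\infty)$ using $\theta=1-1/q$, arriving at $C_{\infty,1,\psi,\bsgamma}^{\,1/q-1/p}\,C_{1,1,\psi,\bsgamma}^{\,1/p}\,C_{\infty,\infty,\psi,\bsgamma}^{\,1-1/q}$.

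The points needing care are bookkeeping rather than a genuine obstacle. First, one must check that both interpolation parameters lie in $[0,1]$; this is exactly where the hypotheses $p\le q$ (so $p/q\le 1$) and $p\ge 1$ (so $1-1/p\ge 0$) are used, and symmetrically in the second regime. Second, Theorem~\ref{thm:generalinterpol} inherits from \eqref{intpol2} the restriction $0<\theta<1$, so the degenerate situations $\theta\in\{0,1\}$ (equivalently $p$ or $q\in\{1,\infty\}$, collapsing a step to the identity or a triangle to an edge) should be dispatched separately; these are either covered directly by Theorem~\ref{thm:main} or follow by reading the exponent identities with the convention $\frac00=0$ announced before the subsection. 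With these checks the exponent arithmetic is routine, since in each regime the three exponents are manifestly non-negative and sum to one.
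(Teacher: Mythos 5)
Your proof is correct and follows essentially the same strategy as the paper: iterate the two-point interpolation inequality of Theorem~\ref{thm:generalinterpol} between the corner values supplied by Theorem~\ref{thm:main}. The only (immaterial) difference is the intermediate point --- the paper first establishes the diagonal bound $\|\imath_{r,r}\|\le C_{1,1}^{1/r}C_{\infty,\infty}^{1-1/r}$ and then interpolates between $(1,\infty)$ and $(r,r)$, whereas you first move along the edge $p=1$ to $(1,\tilde q)$ and then interpolate with $(\infty,\infty)$; the exponent arithmetic comes out the same either way.
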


\begin{proof}
For simplicity, we abbreviate $C_{p,q}=C_{p,q,\psi,\bsgamma}$. In a
first step, applying Theorem \ref{thm:generalinterpol} in the case
$p_0=q_0=\infty, p_1=q_1=1, \theta=\frac1p$, we get 
\[ 
  \|\imath_{p,p,\psi,\bsgamma}\|
 \,\le\, C_{1,1}^{1/p} \cdot C_{\infty,\infty}^{1-1/p}.
\]
In the case $p<q$, determine $r\in [1,\infty]$ and $\theta\in[0,1]$ via
the equations
\[
\frac{1}{p}=\frac{1-\theta}{1}+\frac{\theta}{r} \quad  \text{and}
\quad \frac{1}{q}=\frac{1-\theta}{\infty}+\frac{\theta}{r}
\]
and obtain, again applying Theorem \ref{thm:generalinterpol},
\[ 
  \|\imath_{p,q,\psi,\bsgamma}\|
 \,\le\, C_{1,\infty}^{1-\theta} \cdot C_{r,r}^{\theta}
 \,\le\, C_{1,\infty}^{1-\theta} \cdot
C_{1,1}^{\theta/r} \cdot C_{\infty,\infty}^{\theta(1-1/r)} \,=\,
 C_{1,\infty}^{1/p-1/q} \cdot C_{1,1}^{1/q} \cdot 
C_{\infty,\infty}^{1-1/p}. 
\]
In the case $q<p$, we similarly get
\[ 
  \|\imath_{p,q,\psi,\bsgamma}\|
 \, \le \,
 C_{\infty,1}^{1/q-1/p} \cdot C_{1,1}^{1/p} \cdot 
C_{\infty,\infty}^{1-1/q}. 
\]
\end{proof}

Adopting the proof technique of a part of Theorem 1 in 
\cite{KriPilWas16}, we can show the following lower bound.
For notational convenience we put $\psi^{1/\infty}=1$.

\begin{theorem}\label{thm:low}
Let
\[
B_p\,=\,
\|\overline{\psi}/\psi^{1/p}\|_{L_{p^\prime}(D)}.
\]
For all $p,q\in[1,\infty]$
\[
\|\imath_{p,q,\psi,\bsgamma}\| \,\ge\,
\sup
\frac{
\left|
\left(
\sum_{\setw\subseteq\setu^c}c_{\setu\cup\setw}\,
\gamma_{\setu\cup\setw}\,B_p^{|\setw|} / \gamma_\setu
\right)_\setu
\right|_q}
{\left| \left(c_\setu\right)_\setu \right|_q},
\]
where the supremum is taken over all families $(c_\setu)_\setu =
(c_u)_{\setu\in\setU_\bsgamma}$ of non-negative real numbers.
\end{theorem}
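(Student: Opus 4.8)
The plan is to bound $\|\imath_{p,q,\psi,\bsgamma}\|$ from below by evaluating the associated operator on a family of tensor-product test elements. By Proposition~\ref{prop:equalnorms} we have $\|\imath_{p,q,\psi,\bsgamma}\|=\|\jmath_{p,q}^{-1}\|$, so it suffices to exhibit, for each family $(c_\setu)_{\setu\in\setU_\bsgamma}$ of non-negative reals, a lower bound of the stated shape for $\|\jmath_{p,q}^{-1}\|$. I would fix a single $h\in L_{p,\psi}(D)$ with $\|h\|_{L_{p,\psi}(D)}=1$ and put $g_\setu=c_\setu\,\gamma_\setu\,\prod_{j\in\setu}h$ for $\setu\in\setU_\bsgamma$, so that $(g_\setu)_\setu\in\ell_{q,\bsgamma}\left((L_{p,\psi}(D^\setu))_\setu\right)$ and $\|(g_\setu)_\setu\|=\left|(c_\setu)_\setu\right|_q$. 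Writing $\beta=\int_D h(t)\,\overline{\psi}(t)\rd t$ and using the explicit formula \eqref{44}, the product structure makes the integrals factorize, so the $\setu$-component of $\jmath_{p,q}^{-1}\left((g_\setu)_\setu\right)$ equals $\bigl(\sum_{\setw\subseteq\setu^c}c_{\setu\cup\setw}\,\gamma_{\setu\cup\setw}\,\beta^{|\setw|}\bigr)\prod_{j\in\setu}h$. Working with $\jmath^{-1}$ rather than $\jmath$ is deliberate: formula \eqref{44} carries no signs, hence for $\beta\ge0$ and $c_\setu\ge0$ every summand is non-negative and no cancellation occurs.

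The main work, and the step I expect to be the real obstacle, is to make $\beta$ as large as possible subject to $\|h\|_{L_{p,\psi}(D)}=1$. By H\"older's inequality, $\left|\int_D h\,\overline{\psi}\right|=\left|\int_D (h\psi^{1/p})(\overline{\psi}\psi^{-1/p})\right|\le\|h\|_{L_{p,\psi}(D)}\cdot B_p$, so $\beta\le B_p$, and the point is to approach equality. For $1<p<\infty$ the H\"older extremizer $h\psi^{1/p}\propto(\overline{\psi}\psi^{-1/p})^{p^\prime/p}$ is well defined and normalizable, since $B_p<\infty$ by \eqref{eq2} together with Remark~\ref{r3}, and attains $\beta=B_p$; for $p=\infty$ the constant $h\equiv1$ gives $\beta=m_\psi=B_\infty$; and for $p=1$ one takes $h=g_n$ as in \eqref{gn} with $G_n$ as in \eqref{Gn}, so that $\beta=\int_D g_n\,\overline{\psi}\to\kappa_\psi=B_1$, exactly as in the proof of Theorem~\ref{thm:main}. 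In every case I thus obtain a function, or a sequence $h_n$, of unit norm with $\beta\to B_p$.

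To conclude, I would compute the two norms. Since $\|h\|_{L_{p,\psi}(D)}=1$, the $\setu$-component above has $L_{p,\psi}(D^\setu)$-norm equal to $\sum_{\setw\subseteq\setu^c}c_{\setu\cup\setw}\gamma_{\setu\cup\setw}\beta^{|\setw|}$ (the absolute value is redundant as all terms are non-negative), whence
\[
\frac{\|\jmath_{p,q}^{-1}\left((g_\setu)_\setu\right)\|}{\|(g_\setu)_\setu\|}
\,=\,
\frac{\left|\left(\sum_{\setw\subseteq\setu^c}c_{\setu\cup\setw}\,\gamma_{\setu\cup\setw}\,\beta^{|\setw|}/\gamma_\setu\right)_\setu\right|_q}{\left|(c_\setu)_\setu\right|_q}
\,\le\,\|\jmath_{p,q}^{-1}\|\,=\,\|\imath_{p,q,\psi,\bsgamma}\|.
\]
Because $d$ is finite, the numerator is a continuous function of $\beta$, so letting $\beta\to B_p$ (along $h_n$ when the supremum is not attained) replaces $\beta$ by $B_p$ while preserving the inequality, and taking the supremum over all non-negative families $(c_\setu)_\setu$ yields the claim. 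The only remaining technical checks are that the tensor-product test elements lie in $\ell_{q,\bsgamma}\left((L_{p,\psi}(D^\setu))_\setu\right)$ and that \eqref{44} applies to them, both of which follow from \eqref{eq2} and Lemma~\ref{l1}.
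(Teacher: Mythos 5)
Your proposal is correct and follows essentially the same route as the paper: the paper also takes tensor-product test elements $\eta_\setu\,T_{\bspitch,\setu}(\prod_{j\in\setu}g_n)$ with $\eta_\setu=c_\setu\gamma_\setu$ (Example \ref{ex4}, which is exactly your $(g_\setu)_\setu$ transported through the isometry $T_\bspitch$), chooses the unit-norm univariate function so that $\int_D g_n\,\overline{\psi}\to B_p$ (a single H\"older/duality extremizer for $p>1$, the sequence \eqref{Gn}--\eqref{gn} for $p=1$), and passes to the limit. Your additional remarks --- working with $\jmath^{-1}$ via \eqref{44} to avoid signs, and the continuity in $\beta$ --- are consistent with, and make explicit, what the paper leaves implicit.
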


\begin{proof} 
Consider a sequence of functions $G_n \in L_p(D)$ such that
$\|G_n\|_{L_{p}(D)}=1$ and 
\[
\lim_{n\to\infty}
\int_D G_n(t)\,
\frac{\overline{\psi}(t)}{\psi^{1/p}(t)} \rd t\,=\, B_p.
\]
In the case $p=1$ we may choose this sequence as in (\ref{Gn}).
In the case $p>1$ it 
suffices to consider a single function $G=G_n$, since
$L_p(D)$ is the dual space of $L_{p^\prime}(D)$.
Define
\[
g_n(t) \,=\, \frac{G_n(t)}{\psi^{1/p}(t)}
\]
and
\[
m_n\,=\,\int_D g_n(t) \, \overline{\psi}(t)\rd t 
\]
to obtain $g_n \in L_{p,\psi}(D)$ with $\|g_n\|_{L_{p,\psi}(D)}=1$
and $\lim_{n\to\infty} m_n = B_p$.
Let $f_\bspitch$ be given by Example \ref{ex4} with $g=g_n$ and
$\eta_\setu=c_\setu \, \gamma_\setu$, where $c_\setu \in \bbR$.
Clearly
$c=m_n$ and
\[
\|f_\bspitch\|_{W_{\bspitch,p,q,\psi,\bsgamma}}\,=\,
\left| \left(c_\setu\right)_\setu \right|_q.
\]
Furthermore,
\[
\|f_\bspitch\|_{W_{A,p,q,\psi,\bsgamma}}\,=\,
\left| \left(
\sum_{\setw\subseteq\setu^c} c_{\setu\cup\setw}\,
\gamma_{\setu\cup\setw} \,m_n^{|\setw|} / \gamma_\setu \right)_\setu 
\right|_q.
\]
Let $n$ tend to $\infty$ to complete the proof.
\end{proof}

\section{Uniform and Polynomial Equivalence of Norms}\label{s5}

So far the number $d$ of variables was fixed. 
In this section we 
study the equivalence of the norm for varying $d$.
More precisely, for $d\in\bbN$ we have a family 
\[
  \bsgamma^{[d]}\,=\,(\gamma_{d,\setu})_{\setu\subseteq[1:d]}
\]
of weights that satisfy \eqref{monot} for every $d$.
Furthermore, we assume that \eqref{eq2} is satisfied
so that 
\[
   W_{\bspitch,p,q,\psi,\bsgamma^{[d]}}
   \,=\,W_{A,p,q,\psi,\bsgamma^{[d]}}\quad
   \mbox{for all}\ d\in\bbN
\]
as vector spaces. Of course, the norm
\[
  \|\imath_{p,q,\psi,\bsgamma^{[d]}}\| \,=\,
  \|\imath_{p,q,\psi,\bsgamma^{[d]}}^{-1}\|
\]
of the embeddings depends on $d$ in all non-trivial cases. 

\begin{definition}\hfill
\begin{description}
\item[{(i)}] 
The weighted anchored and ANOVA norms are
\emph{uniformly equivalent} if
\[
\sup_{d\in\bbN} \|\imath_{p,q,\psi,\bsgamma^{[d]}}\|
\,<\,\infty.
\]
\item[{(ii)}] 
The weighted anchored and ANOVA norms are \emph{polynomially  
equivalent} if there exist $\tau>0$ such that 
\[
\|\imath_{p,q,\psi,\bsgamma^{[d]}}\|
\,=\,
O\left(d^{\,\tau}\right). 
\]
The smallest (or infimum of) such $\tau$ is called the exponent of 
polynomial equivalence of the norms. 
\end{description}
\end{definition}

We now consider special classes of weights to see when there is
uniform or polynomial equivalence. 
The 
explicit formulas for 
$C_{p,q,\psi,\bsgamma^{[d]}}
=\|\imath_{p,q,\psi,\bsgamma^{[d]}}\|$ 
for $p,q\in\{1,\infty\}$ according to Theorem \ref{thm:main}
are very similar to those obtained in \cite{HeRiWa14} for $D=[0,1]$
and $\psi = 1$. Since applying them with Theorem
\ref{thm:main2}
is rather straightforward, we will omit the proofs of upper bounds of
$\|\imath_{p,q,\psi,\bsgamma^{[d]}}\|$.
Corresponding lower bounds can be
obtained by applying techniques from \cite{KriPilWas16} to
Theorem \ref{thm:low}. This is why proofs of some lower bounds are 
also omitted. We begin with the product weights that are the most
commonly used in the literature. 

\vskip 1pc
\emph{Product weights}, introduced in \cite{SloWoz}, 
have the following form
\[
  \gamma_\setu\,=\,\prod_{j\in\setu}\gamma_j
  \quad\mbox{for positive numbers\ }\gamma_j.
\]
We have 
\[
C_{\infty,1,\psi,\bsgamma^{[d]}}
\,=\,C_{\infty,\infty,\psi,\bsgamma^{[d]}}
\,=\,\prod_{j=1}^d(1+m_\psi\,\gamma_j)
\]
and
\[
C_{1,1,\psi,\bsgamma^{[d]}}\,=\,C_{1,\infty,\psi,\bsgamma^{[d]}}
\,=\,\prod_{j=1}^d(1+\kappa_\psi\,\gamma_j). 
\]
Hence the conditions
\begin{equation}\label{prod1}
\sum_{j=1}^\infty\gamma_j\,<\,\infty\quad\mbox{and}\quad
\sup_{d\in\bbN}\frac{\sum_{j=1}^{d}\gamma_j}{\ln(d+1)}\,<\,\infty
\end{equation}
are sufficient for the corresponding uniform and polynomial equivalences
for all $p,q \in [1,\infty]$.
From Theorem \ref{thm:low}, one can conclude that \eqref{prod1} are
also necessary for corresponding uniform and polynomial
equivalences
for any $p,q \in [1,\infty]$, cf.~\cite[Prop.~3]{KriPilWas16}. 

\vskip 1pc
\emph{Product order-dependent weights}, 
introduced in \cite{KSS}, have the form
\[
  \gamma_{d,\setu}\,=\,\left(|\setu|!\right)^{\beta_1}\cdot
   \prod_{j\in\setu}\frac{c}{j^{\beta_2}}\quad{with}\quad
   0\,<\,\beta_1\,<\,\beta_2\quad\mbox{and}\quad c>0. 
\]
The following lower bound holds for every $p,q$ and $\tau>0$
\[
\|\imath_{p,q,\psi,\bsgamma^{[d]}}\|
   \,=\,\Omega(d^{\,\tau}),
\]
    cf.\ \cite[Prop.~18]{HeRiWa14} for the extremal cases $p=q=1$
and $p=q=\infty$ and cf. \cite[Prop.~8]{KriPilWas16} for the 
general case.
Hence there is no polynomial equivalence for any $p,q \in [1,\infty]$. 
\vskip 1pc
\emph{Finite order weights}, introduced in \cite{DSWW}, 
are such that 
\[
   \gamma_{d,\setu}\,=\,0\quad\mbox{if}\quad|\setu|\,>\,r
\]
for a given fixed number $r\ge1$. We consider the following
special weights
\begin{equation}\label{eq:spfinord}
  \gamma_\setu\,=\,\omega^{|\setu|}\quad\mbox{if}\quad|\setu|\,\le\,r,
\end{equation}
where $\omega$ is a positive number. For $q=1$, we have 
\[
   C_{1,1,\psi,\bsgamma^{[d]}}\,=\,(1+\kappa_\psi\,\omega)^r 
  \qquad\mbox{and}\qquad
C_{\infty,1,\psi,\bsgamma^{[d]}}\,=\,(1+m_\psi\,\omega)^r,
\]
whereas for $q=\infty$, we have 
\[
C_{p,\infty,\psi,\bsgamma^{[d]}}\,=\,\Theta(d^r)  \quad\mbox{for\ } 
p\in\{1,\infty\}.
\]
Using Theorem \ref{thm:main2} we conclude that 
for arbitrary $p$ and
$q$ 
\[
\|\imath_{p,q,\psi,\bsgamma^{[d]}}\|
\,=\,O\left(d^{r(1-1/q)}\right).
\]
Hence we have at least polynomial equivalence with the exponent
bounded by $r(1-1/q)$. For $q=1$, we have uniform equivalence. 
Actually, the exponent of polynomial equivalence is precisely $r(1-1/q)$. 
Indeed, use the lower bound in Theorem \ref{thm:low} for $c_\setu=1$ 
if $|\setu|=r$ and $c_\setu=0$ otherwise. 
If $|u| \leq r$ we obtain
\[
\sum_{\setw\subseteq\setu^c}c_{\setu\cup\setw}\,
\gamma_{\setu\cup\setw}\,B_p^{|\setw|} / \gamma_\setu
\,=\,
(\omega B_p)^{r-|\setu|} \cdot
|\{\setw\subseteq\setu^c : |\setw|=r-|\setu|\}|
\,=\, (\omega B_p)^{r-|\setu|} \cdot \binom{d-|\setu|}{r-|\setu|}.
\]
For $1 \leq q < \infty$ this yields
\[
\|\imath_{p,q,\psi,\bsgamma^{[d]}}\| \,\geq\,
\frac{
\left(
\sum_{s=0}^r
(\omega B_p)^{q\,(r-s)} \cdot
\binom{d-s}{r-s}^q \cdot \binom{d}{s} \right)^{1/q}}
{\binom{d}{r}^{1/q}}.
\]
Considering only the term with $s=0$ this gives
\[
\|\imath_{p,q,\psi,\bsgamma^{[d]}}\| \,\geq\,
(\omega B_p)^r \cdot \binom{d}{r}^{1-1/q}
\,=\,\Omega\left(d^{r\,(1-1/q)}\right).
\]
This estimate is valid for $q=\infty$, too.

\vskip 1pc
Consider \emph{finite diameter weights}, introduced by Creutzig (see 
\cite{Cre07} and \cite{NowWoz08}), of the form
\[
  \gamma_\setu\,=\,\left\{\begin{array}{ll} \omega^{|\setu|} 
 &\mbox{if\ }{\rm diam}(\setu)\,\le\,r,\\
 0  &\mbox{if\ }{\rm diam}(\setu)\,>r, \end{array}\right.
\] 
where ${\rm diam}(\setu)=\max_{i,j\in\setu}(i-j)$, where 
${\rm diam}(\emptyset)=0$, by convention.
As in \cite{KriPilWas16}, 
\[
C_{1,1,\psi,\bsgamma^{[d]}}\,=\,(1+\omega\,\kappa_\psi)^{r+1}
\quad\mbox{and}\quad
C_{\infty,1,\psi,\bsgamma^{[d]}}\,=\,(1+\omega\,m_\psi)^{r+1},
\]
whereas
\[
  C_{p,\infty,\psi,\bsgamma^{[d]}}\,=\,\Theta(d)\quad\mbox{for\ }
p\in\{1,\infty\}.
\]
By applying interpolation we get 
$\|\imath_{p,q,\psi,\bsgamma^{[d]}}\|=O\left(d^{1-1/q}\right)$ 
for all $p,q$. Similar to the proof of Proposition 7 in 
\cite{KriPilWas16} one can show that the above bound is sharp, i.e., 
\[
\|\imath_{p,q,\psi,\bsgamma^{[d]}}\|
\,=\,\Theta\left(d^{1-1/q}\right),
\]
which means polynomial equivalence with the exponent $1-1/q$.

\vskip 1pc
Finally, consider special \emph{dimension-dependent weights} 
\begin{equation}\label{creazy}
   \gamma_{d,\setu}\,=\,d^{-|\setu|}.
\end{equation}
introduced in \cite{HegWas}. Then for $q\in\{1,\infty\}$, 
\[
   C_{1,q,\psi,\bsgamma^{[d]}}\,=\,\left(1+\kappa_\psi/d\right)^d\,\le\,
    \exp\left(\kappa_\psi\right)
   \quad\mbox{and}\quad
   C_{\infty,q,\psi,\bsgamma^{[d]}}\,=\,\left(1+m_\psi/{d}\right)^d\,\le\,
     \exp\left(m_\psi\right). 
\]
The interpolation yields uniform equivalence for all $p$ and $q$.

\section{Applications to Integration and Approximation}\label{s6}

A thorough study of applications of embedding results to high-
and infinite-dimensional integration in the
setting of reproducing kernel Hilbert spaces with product weights
is carried out in \cite{GnHeHiRi16}. This abstract approach
covers the particular case $p=q=2$ with product weights
$\gamma_\setu$ in the setting of the present paper. 
A number of new error estimates and new tractability results could be 
obtained in \cite{GnHeHiRi16}
by transferring known results for the anchored setting to the 
ANOVA setting or vice versa. Roughly speaking, the anchored setting
is known to be very well suited for the analysis of deterministic
algorithms, while the ANOVA setting is much preferable for the 
analysis of randomized algorithms.

The results of the present paper allow to transfer results 
between the anchored and the ANOVA setting beyond Hilbert
spaces and product weights.
Unfortunately, we are only aware of few results for the non-Hilbert space 
setting or the corresponding weighted discrepancies, see 
\cite{Ai14,HiPiSi08,KriPilWas16,KrPiWa16b,LeoPil03,SloWoz,Was14}.
In the sequel we discuss the transfer of results from
\cite{HiPiSi08,KriPilWas16,KrPiWa16b,Was14} from the anchored
setting to the ANOVA setting.

At first, we illustrate how to transfer
the tractability results from \cite{HiPiSi08}.
The results there are formulated in terms of
the weighted star discrepancy. Via Koksma-Hlawka duality, this 
corresponds to results for uniform integration on $D^d = [0,1]^d$ in 
$W_{\bspitch,1,1,\psi,\bsgamma}$ in the case $\psi=1$. 

For product weights satisfying the condition 
$\sum_{j=1}^\infty\gamma_j<\infty$, Theorem 3 in 
\cite{HiPiSi08} shows that this problem is strongly tractable, i.e.,
the number $N(\varepsilon,d)$ of sample points needed to achieve an error $
\varepsilon>0$ can be bounded by $C \varepsilon^{-\beta}$ with absolute 
constants $C,\beta>0$ independent of the dimension $d$. 
Moreover, it is also 
shown there that the exponent of strong tractability, that is the 
infimum over all possible $\beta$, is 1. 
The results in Section \ref{s5} show that 
anchored and ANOVA norms are uniformly equivalent. Hence we immediately
obtain that we also have strong tractability of uniform integration on 
$[0,1]^d$ in  $W_{A,1,1,\psi,\bsgamma}$ in the case $\psi=1$. The 
algorithms achieving this are QMC-algorithms using superpositions of 
digital nets over 
$\mathbb{Z}_2$. For details of the construction we refer to \cite{HiPiSi08}
and the references therein.

For general weights, Corollary 1 in \cite{HiPiSi08} shows that integration 
is polynomially tractable with $\varepsilon$-exponent 2 and $d$-exponent 0
under the condition
\[
  C_\gamma \,=\, \sup_{d\in\bbN} \max_{\setu\subseteq[1:d]} 
\gamma_{d,\setu} \sqrt{|\setu|} < \infty. 
\]
This condition is satisfied for bounded finite order weights, for finite 
diameter weights and for the dimension dependent weights in \eqref{creazy}.
More exactly, we have the estimate
\[
 N(\varepsilon,d) \,\le C\, C_\gamma \, (1+\log d) \,\varepsilon^{-2} 
\]
with some constant $C>0$. 
From the results in Section \ref{s5} we infer that the same holds for
uniform integration on $[0,1]^d$ in  $W_{A,1,1,\psi,\bsgamma}$ in the 
case $\psi=1$ for the special finite order weights defined in 
\eqref{eq:spfinord}, for finite diameter weights, and for the special 
dimension dependent weights \eqref{creazy}.

The anchored spaces studied in this paper were also considered in
\cite{Was14} in the context of function approximation. Actually,
functions of infinitely-many variables are studied in \cite{Was14},
however, if we define $\gamma_\setu = 0$ for
every finite set $\setu \subseteq \bbN$ with $\setu \setminus [1:d]
\neq \emptyset$ in the setting of the latter paper, we obtain
functions on $D^d$. For a given probability density $\omega:D\to\bbR_+$ 
and a real $1 \leq s \leq \infty$ one
is interested in approximating $f\in W_{\bspitch,p,q,\psi,\bsgamma}$
with the error measured in a norm $\|\cdot\|_G$
satisfying the following condition. If $s < \infty$, then
\[
  \|f \|_G\,\le\,
  \left(\sum_{\setu\in\setU_\bsgamma}\|f_{\bspitch,\setu}\|_{G_\setu}^s
\right)^{1/s}
\]
for the anchored decomposition 
\[
f = \sum_{\setu\in\setU_\bsgamma}f_{\bspitch,\setu},
\]
where
\[
  \|f_\setu\|_{G_\setu}=\left(\int_{D^{|\setu|}}|f_\setu(\bsx_\setu)|^s
  \,\prod_{j\in\setu}\omega(x_j)\rd\bsx_\setu\right)^{1/s}.
\]
For $s=\infty$ the condition is modified in the usual way.
Almost optimal algorithms and sharp complexity bounds for such 
approximation problems were derived in \cite{Was14}; however, only for 
the anchored spaces. The uniform equivalence studied in the current paper 
allows to transfer the result of \cite{Was14} to the case of ANOVA spaces
resulting in almost optimal algorithms and sharp complexity bounds.
The multivariate decomposition method, which is particularly tuned to the 
anchored setting at a first glance, turns out to be almost
optimal also in the ANOVA setting. 

Now we turn to the results from \cite{KriPilWas16,KrPiWa16b}. 
For problems with large number $d$ of variables, one may try to replace
the original functions $f$ by functions $f_k$ with only $k \ll d$ 
variables, namely,
\[f_k(\bsx)\,=\,f(x_1,\dots,x_k,0,0,\dots,0).
\]
As shown in \cite{KriPilWas16,KrPiWa16b}, for weighted integration 
and weighted $L_s$ approximation, as above, and for modest error
demands $\e$, one can truncate the dimension with $k=k(\e)$ being
very small. This holds for problems defined on anchored spaces
$W_{\bspitch,p,q,\psi,\bsgamma}$. Moreover, in general, this desirable
property does not hold for ANOVA spaces $W_{A,p,q,\psi,\bsgamma}$.
However, if the anchored and ANOVA spaces are uniformly equivalent then
also in the setting of ANOVA spaces, one can use functions $f_{k(\e)}$
with $k(\e)$ only slightly larger than the corresponding truncation
dimension for the anchored spaces.

\section{Appendix}

As previously, $1\le p\le \infty$, and $p^\prime$ denotes its
conjugate, $1/p+1/p^\prime=1$. Furthermore, let $(\Omega,\mathfrak{A},\mu)$
denote a probability space.

\begin{lemma}\label{prop1}
Let $g : \Omega \to \bbR$ be a measurable function.
Then we have $g\in L_{p^\prime}(\mu)$ if and only if
$fg \in L_1(\mu)$ for all $f\in L_p(\mu)$.
\end{lemma}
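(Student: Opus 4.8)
The plan is to prove the two implications separately. The forward implication is immediate, while the reverse rests on the closed graph theorem together with the duality $(L_p(\mu))^\ast \cong L_{p^\prime}(\mu)$.

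For the forward direction, suppose $g \in L_{p^\prime}(\mu)$. Then for every $f \in L_p(\mu)$ H\"older's inequality gives $\int_\Omega |fg|\,\rd\mu \le \|f\|_{L_p(\mu)}\,\|g\|_{L_{p^\prime}(\mu)} < \infty$, so $fg \in L_1(\mu)$; this covers all $1 \le p \le \infty$, including the pair $p=\infty$, $p^\prime=1$.

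For the reverse direction, assume $fg \in L_1(\mu)$ for every $f \in L_p(\mu)$. The case $p=\infty$ is trivial: since $\mu$ is finite, the constant function $\mathbf 1 \in L_\infty(\mu)$, and $\mathbf 1\cdot g = g \in L_1(\mu) = L_{p^\prime}(\mu)$. So assume $1 \le p < \infty$, so that $L_p(\mu)$ is a Banach space, and define the linear functional $\Lambda\colon L_p(\mu)\to\bbR$ by $\Lambda(f) = \int_\Omega fg\,\rd\mu$, which is well defined by hypothesis. The first step is to show that $\Lambda$ has a \emph{closed graph}. Suppose $f_n \to f$ in $L_p(\mu)$ and $\Lambda(f_n)\to c$. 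Passing to a subsequence, I may assume $f_n \to f$ $\mu$-a.e.\ and that there is a dominating function $h \in L_p(\mu)$ with $|f_n| \le h$ for all $n$. Invoking the hypothesis once more, now applied to $h$ itself, yields $h|g| \in L_1(\mu)$; since $|f_n g| \le h|g|$ and $f_n g \to fg$ a.e., dominated convergence gives $\Lambda(f_n) \to \Lambda(f)$, whence $c=\Lambda(f)$. Thus the graph is closed, and the closed graph theorem shows that $\Lambda$ is continuous, i.e.\ $\Lambda \in (L_p(\mu))^\ast$. The Riesz representation theorem for $L_p$-duals, valid for $1\le p<\infty$ because $\mu$ is finite, identifies $(L_p(\mu))^\ast$ isometrically with $L_{p^\prime}(\mu)$; tracing through this identification shows that the representing element of $\Lambda$ is precisely the class of $g$, so $g \in L_{p^\prime}(\mu)$.

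The hard part will be the closedness of the graph, and the decisive point there is the extraction of an a.e.-convergent subsequence together with an $L_p$ majorant $h$: the hypothesis is then used a second time, on $h$ rather than on the individual $f_n$, to produce the single integrable dominating function $h|g|$ needed for dominated convergence. I note an alternative that sidesteps citing the $L_p$-duality theorem: apply the truncations $g_n = g\,\mathbf 1_{\{|g|\le n\}} \in L_{p^\prime}(\mu)$ and set $\Lambda_n(f)=\int_\Omega f g_n\,\rd\mu$, for which a test-function argument gives $\|\Lambda_n\|_{(L_p(\mu))^\ast} = \|g_n\|_{L_{p^\prime}(\mu)}$. For each fixed $f$ the bound $|f g_n| \le |fg| \in L_1(\mu)$ and dominated convergence yield $\sup_n |\Lambda_n(f)| < \infty$, so the Banach--Steinhaus theorem bounds $\sup_n \|g_n\|_{L_{p^\prime}(\mu)}$, and monotone convergence then delivers $g \in L_{p^\prime}(\mu)$ directly.
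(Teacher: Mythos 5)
Your proof is correct, but your primary argument takes a genuinely different route from the paper's. The paper handles $p=\infty$ by testing against the sign of $g$ (you test against the constant function $\mathbf{1}$, which works equally well since $\mu$ is a probability measure), and for $1\le p<\infty$ it proceeds exactly as in your closing remark: truncate $g$ to $g_n=g\,\mathbf{1}_{\{|g|\le n\}}\in L_{p'}(\mu)$, note via the hypothesis and dominated convergence that the bounded functionals $I_nf=\int_\Omega f g_n\rd\mu$ converge pointwise on $L_p(\mu)$ to $If=\int_\Omega fg\rd\mu$, and invoke Banach--Steinhaus to conclude that $I$ is bounded and hence represented by an element of $L_{p'}(\mu)$. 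Your main argument instead applies the closed graph theorem directly to $\Lambda(f)=\int_\Omega fg\rd\mu$; the price is the extraction of an a.e.-convergent subsequence with an $L_p$ majorant $h$ (a standard by-product of the Riesz--Fischer completeness proof) and the second application of the hypothesis to $h$ itself, which is the decisive step that makes dominated convergence available. Both routes end by appealing to the duality $(L_p(\mu))^\ast\cong L_{p'}(\mu)$, and in both cases one should strictly speaking add the one-line verification that the representing element $\tilde h$ is $g$ itself (test $\int_\Omega f(g-\tilde h)\rd\mu=0$ against $f=\mathbf{1}_E\,\mathrm{sign}(g-\tilde h)\in L_\infty(\mu)\subseteq L_p(\mu)$); your alternative sketch bypasses this entirely by bounding $\sup_n\|g_n\|_{L_{p'}(\mu)}$ and finishing with monotone convergence, and is essentially the paper's proof made slightly more explicit.
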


\begin{proof}
Suppose that
$fg \in L_1(\mu)$ for all $f\in L_p(\mu)$.
For $p=\infty$ the function $f$, given by 
$f(x) = g(x)/|g(x)|$ if $g(x) \neq 0$ and $f(x)=0$ otherwise, is in 
$L_\infty(\mu)$. Hence $|g|= fg \in L_1(\mu)$, implying 
$g\in L_1(\mu)$.

Now let $1\le p <\infty$.
For $n\in\bbN$ let $g_n:\Omega \to \bbR$ be the function equal to 
$g(x)$ if $|g(x)|\le n$ and $g_n(x)=0$ otherwise. Then $g_n$ is bounded 
and $g_n\to g$ almost everywhere. In particular 
$g_n\in L_{p^\prime}(\mu)$, 
so the functionals $I_n$ defined by $I_n f = \int_\Omega
f g_n \, {\rm d}\mu$
are linear and bounded on $L_p(\mu)$. 
By assumption and the dominated convergence theorem, the limit
\begin{align*}
\lim_{n\to\infty} I_nf
=\lim_{n\to\infty} \int_\Omega f g_n \, {\rm d} \mu
= \int_\Omega fg \, {\rm d}\mu
\end{align*}
exists and is finite for every $f\in L_p(\mu)$. So the bounded
linear functionals $I_n$ converge pointwise to the linear
functional $I$  given  by $I f = \int_\Omega f g \, {\rm d}\mu$. 
The Banach-Steinhaus Theorem implies that $I$ is a bounded functional 
on $L_p(\mu)$.
Since $L_{p'}(\mu)$ is the dual space of
$L_p(\mu)$, we obtain $g\in L_{p^\prime}(\mu)$. 

H\"older's inequality immediately yields the reverse implication.
\end{proof}

\newcommand\BIT{\emph{BIT\ }}
\newcommand\Com{\emph{Computing\ }}
\newcommand\CA{\emph{Constr. Approx.\ }}
\newcommand\FCM{\emph{Found. Comput. Math.\ }}
\newcommand\JAT{\emph{J. Approx. Th.\ }}
\newcommand\JC{\emph{J. Complexity\ }}
\newcommand\JCP{\emph{J. of Computational Physics\ }}
\newcommand\JMA{\emph{SIAM J. Math. Anal.\ }}
\newcommand\JMAA{\emph{J. Math. Anal. Appl.\ }}
\newcommand\JMM{\emph{J. Math. Mech.\ }}
\newcommand\JMP{\emph{J. Math. Physics\ }}
\newcommand\MC{\emph{Math. Comp.\ }}
\newcommand\NM{\emph{Numer. Math.\ }}
\newcommand\RMJ{\emph{Rocky Mt. J. Math.\ }}
\newcommand\SJNA{\emph{SIAM J. Numer. Anal.\ }}
\newcommand\SR{\emph{SIAM Rev.\ }}
\newcommand\TAMS{\emph{Trans. Amer. Math. Soc.\ }}
\newcommand\TOMS{\emph{ACM Trans. Math. Software\ }}
\newcommand\USSR{\emph{USSR Comput. Maths. Math. Phys.\ }}

\vskip 1pc
\noindent Mathematisches Seminar\\
Christian-Albrechts-Universit\"at zu Kiel\\
Ludewig-Meyn-Str.~4\\
24098 Kiel\\
Germany\\
{\em Email Address:} gnewuch@math.uni-kiel.de

\vskip 1pc
\noindent Fachbereich Mathematik\\
Technische Universit\"at Kaiserslautern\\
Postfach 3049\\
67653 Kaiserslautern\\
Germany\\
{\em Email Address:} hefter@mathematik.uni-kl.de

\vskip 1pc
\noindent Institut f\"ur Analysis\\
Johannes Kepler Universit\"at Linz\\
Altenberger Str.~69\\
4040 Linz\\
Austria\\
{\em Email Address:} aicke.hinrichs@jku.at

\vskip 1pc
\noindent Fachbereich Mathematik\\
Technische Universit\"at Kaiserslautern\\
Postfach 3049\\
67653 Kaiserslautern\\
Germany\\
{\em Email Address:} ritter@mathematik.uni-kl.de

\vskip 1pc
\noindent Department of Computer Science\\
Davis Marksbury Building\\
329 Rose St. \\
University of Kentucky\\
Lexington, KY 40506-0633, USA\\
{\em Email Address:} greg@cs.uky.edu
\end{document}